%% file: main.tex
\documentclass[12pt,twoside]{amsart}
\input{includeNice3}
\usepackage{mabliautoref}
\usepackage{amssymb}
\usepackage{amscd}
\usepackage[abbrev,alphabetic]{amsrefs}
\usepackage{csquotes}
\usepackage[a4paper,margin=1in]{geometry}  
\usepackage{CJKutf8}
\usepackage{soul}

\usepackage{mathrsfs}
\usepackage{caption}

\usepackage{url}
\RequirePackage{booktabs, multirow}
\RequirePackage{pgf}

\theoremstyle{plain}

\title[Deformations of fibered Calabi--Yau varieties]
{Deformations of fibered Calabi--Yau varieties}

 \author[B. Bakker]{Benjamin Bakker} 
\address{Department of Mathematics, Statistics, and Computer Science, University of Illinois at Chicago, 851 S. Morgan Street, Chicago, IL 60607, USA}
\email{bakker.uic@gmail.com}

 \author[K. DeVleming]{Kristin DeVleming} 
\address{Department of Mathematics, University of California San Diego, 2985 Muir Lane, La Jolla, CA 92093, USA}
\email{kedevleming@ucsd.edu}

\author[S. Filipazzi]{Stefano Filipazzi}
\address{Department of Mathematics, Duke University, 120 Science Drive, 117 Physics Building, Campus Box 90320, Durham, NC 27708-0320, USA}
\email{stefano.filipazzi@duke.edu}

\author[R. Laza]{Radu Laza} 
\address{Department of Mathematics, Stony Brook University, Stony Brook, NY 11794, USA}
\email{radu.laza@stonybrook.edu}

\author[J. Li]{Jennifer Li}
\address{Department of Mathematics, Princeton University, Fine Hall, 304 Washington Road, Princeton, NJ 08540, USA}
\email{jenniferli@princeton.edu}

\author[R. Svaldi]{Roberto Svaldi}
\address{Dipartimento di Matematica ``F. Enriques'', Universit\`a degli Studi di Milano, Via Saldini 50, 20133 Milano (MI), Italy}
\email{roberto.svaldi@unimi.it}

\author[C. Wang]{Chengxi Wang}
\address{Yau Mathematical Sciences Center,
Jingzhai, Tsinghua University, Haidian District,
Beijing, China, 100084}
\email{chxwang@tsinghua.edu.cn}

\author[J. Zhao]{Junyan Zhao}
\address{Department of Mathematics, University of Maryland, William E. Kirwan Hall, 4176 Campus Dr, College Park, MD 20742, USA}
\email{jzhao81@umd.edu}

\subjclass[2020]{Primary: 14D15, 14J32;
Secondary: 14D07.}

\keywords{Fibered Calabi--Yau varieties, deformation of fibrations, $T^1$-lifting}

\def\O#1.{\mathcal {O}_{#1}}			
\def\pr #1.{\mathbb P^{#1}}				
\def\af #1.{\mathbb A^{#1}}			
\def\ses#1.#2.#3.{0\to #1\to #2\to #3 \to 0}	
\def\xrar#1.{\xrightarrow{#1}}			
\def\K#1.{K_{#1}}						
\def\bA#1.{\mathbf{A}_{#1}}			
\def\bM#1.{\mathbf{M}_{#1}}
\def\bN#1.{\mathbf{N}_{#1}}
\def\bL#1.{\mathbf{L}_{#1}}				
\def\bB#1.{\mathbf{B}_{#1}}				
\def\bK#1.{\mathbf{K}_{#1}}			
\def\subs#1.{_{#1}}					
\def\sups#1.{^{#1}}						
\def \neone#1.{\overline{{\rm NE}(#1)}}

\def\Def{\operatorname{Def}}
\def\gr{\operatorname{gr}}

\begin{document}

\begin{abstract}
    Koll\'{a}r showed in \cite{kollar15} that 
    small deformations of elliptically fibered smooth $K$-torsion varieties with $H^2(X,\mathcal{O}_X)=0$ remain elliptically fibered. We extend this result to any fibered smooth $K$-torsion variety $X$ with $H^2(X,\mathcal{O}_X)=0$, using Hodge theoretic techniques and the $T^1$-lifting criterion of Kawamata--Ran \cites{ran, Kaw92}. More generally, our strategy implies that even without the cohomological assumption, small deformations of a semiample line bundle on a smooth $K$-torsion variety remain semiample up to homological equivalence.
\end{abstract}

\maketitle

\tableofcontents

\input{introduction_arxiv_v1}

\subsection*{Acknowledgments}
This work began at the workshop ``Higher dimensional log Calabi--Yau varieties'' held in 2024 at the American Institute of Mathematics organized by Yoshinori Gongyo, Mirko Mauri, Joaqu\'in Moraga, and the sixth named author.  We are indebted to the institute for their support and hospitality.  We are very grateful to Paul Hacking for bringing our attention to this problem, and for a number of enlightening conversations.  We warmly thank Patrick Brosnan, Yoshinori Gongyo, Mirko Mauri, and Ziquan Zhuang for several insightful conversations.

BB was partially supported by NSF grant DMS-2401383. 
KD was partially supported by NSF grant DMS-2302163.
SF was partially supported by the ERC starting grant $\#$804334 and by Duke University. 
RL was partially supported by NSF grant DMS-2502134.
JL was supported by the Simons Foundation grant SFI-MPS-MOV-00006719-02.
RS was partially supported by the “Programma per giovani ricercatori Rita Levi Montalcini” of MUR and by PSR 2022 – Linea 4 of the University of Milan.  He is a member of the GNSAGA group of INDAM.
JZ was supported by a Simons travel grant.
    
\input{deformation_arxiv_v1}

\input{subvarieties_arxiv_v1}

\bibliographystyle{amsalpha}
\bibliography{refs}
	
\end{document}

%% file: introduction_arxiv_v1.tex
\section{Introduction}

In this article, we work over the field of complex numbers $\mathbb{C}$.
By the Lefschetz principle, all results extend to germs of algebraic families defined over any algebraically closed field of characteristic $0$, provided the assumptions in the corresponding statements are satisfied.

\medskip

The class of \emph{$K$-torsion varieties},
i.e., varieties with numerically trivial canonical line bundle, plays a central role in the birational classification of algebraic varieties. According to the classification framework from the Minimal Model Program, up to birational equivalence, projective varieties are expected to admit a decomposition via fibrations---possibly to a point---whose general fibers have canonical class that is either ample, numerically trivial, or anti-ample. These three classes of varieties demonstrate markedly different behavior. \emph{Canonically polarized varieties} and \emph{Fano varieties}—those for which the canonical class is ample and anti-ample, respectively—are, by definition, endowed with a natural ample line bundle. This line bundle yields a projective embedding and often serves as a first step toward understanding their classification and behavior in families.
The class of $K$-torsion varieties is approached with different methods.

Within the class of $K$-torsion varieties, the Beauville--Bogomolov decomposition theorem (cf.\ \cites{Bog74, Bea83}) asserts that any projective manifold with numerically trivial canonical class admits a finite \'etale cover by products of abelian varieties, irreducible holomorphic symplectic manifolds, and strict Calabi--Yau manifolds. With this result, it is natural to study the birational geometry, deformation theory, and moduli of $K$-torsion varieties by studying those for varieties belonging to each of the three classes appearing in the Beauville--Bogomolov decomposition.

\medskip

A natural problem in the study of $K$-torsion varieties concerns the existence and structure of \textit{fibrations} on these varieties, i.e., a surjective morphism 
$f \colon X \to Y$ 
with connected fibers, from a $K$-torsion variety 
$X$ onto a projective variety $Y$. Such morphisms encode many important structural properties of $X$, and they naturally appear in other studies of $K$-torsion varieties, particularly, when $\dim Y < \dim X$.
For example, 
to every elliptic Calabi-Yau threefold $X$ one can
associate a six-dimensional 
$N=1$ 
supergravity theory obtained as the low-energy limit of F-theory compactified on 
$X$,
(cf. 
\cites{Vafa, MV1, MV2, Donagi}), 
and Lagrangian fibrations on irreducible symplectic manifolds are algebraic realizations of the Strominger--Yau--Zaslow fibration, one of the structures at the heart of mirror symmetry for symplectic varieties (cf. \cites{Mat99, Gross_SYZ}).

\medskip

In projective algebraic geometry,
fibrations over lower dimensional varieties are naturally induced by semiample divisors that are not big. Indeed, given any semiample line bundle $L$ on a variety $X$, the global sections of $L^{\otimes m}$ for some $m\geq1$ yield a morphism $X \to \mathbb{P}H^0(X, L^{\otimes m})$;
when $L$ is not big, the image of $X$ via this morphism has strictly lower dimension than $X$.  
Conversely, given a fibration $f: X \to Y$ where $\dim Y < \dim X$, the pullback of any ample line bundle on $Y$ will be a semiample divisor on $X$ that is not big.  In the special case of $K$-torsion varieties, the generalized abundance conjecture predicts that every nef line bundle is semiample up to numerical equivalence (see, e.g., \cite{Oguiso93}, \cite{MP97}*{Chapter 10}, \cite{kollar15}*{Conjecture 51}, or  \cite{LP20}).
Thus, the numerical class of any nef line bundle on a $K$-torsion variety is expected to induce a fibration.  

\medskip

Given a fibration $f\colon X \to Y$ on a $K$-torsion variety $X$, it is then natural to ask if the fibration structure deforms in families, i.e., given a deformation $\mathcal{X} \to T$ of $\mathcal{X}_0 = X$, does every fiber $\mathcal{X}_t$ admit a fibration?  If $L$ is the semiample line bundle on $X$ inducing the fibration $f$, does $L$ deform to a relatively semiample line bundle on $\mathcal{X}$ inducing a fibration on each fiber?  The answer to this question is negative in general.
For instance, the product of any abelian variety and an elliptic curve admits a fibration by elliptic curves via the first projection;
however, a general algebraic deformation will be a simple
abelian variety which admits no fibration by lower dimensional abelian varieties. Similarly, certain K3 surfaces admit elliptic fibrations, but a very general algebraic K3 surface has Picard rank 1, 
and thus a very general algebraic deformation of an elliptically fibered K3 surface admits no elliptic fibration. 
Nonetheless, under suitable cohomology vanishing assumptions in the $K$-torsion case, elliptic fibrations indeed deform.  

\medskip

Building on earlier work of Wilson \cites{wilsoni,wilsonii} in the three-dimensional case, Koll\'ar \cite{kollar15} proved that all small deformations of an elliptically fibered $K$-torsion variety $f\colon X \to Y$ remain elliptically fibered, assuming the cohomological vanishing $H^2(X,\mathcal O_X)=0$.  The first main result of this paper is the following generalization of Koll\'ar's result, which provides a general affirmative answer to the deformation problem for arbitrary fibrations for any $K$-torsion variety $X$ 
satisfying the cohomological vanishing $H^2(X, \mathcal O_X)=0$.

\begin{theorem}[cf.\ Corollary~2.4]
\label{main.thm.intro_CY}
Let 
$\pi \colon \mathcal{X} \to T$ 
be a smooth projective family of $K$-torsion varieties over an analytic germ $(0\in T)$. Assume that the fiber $\mathcal X_0 \coloneqq \pi^{-1}(0)$
satisfies
$H^2(\mathcal{X}_0,\mathcal O_{\mathcal X_0})=0$,
and that there exists a fibration
$\psi_0 \colon \mathcal X_0 \to \mathcal Y_0$ with $\cY_0$ projective.
Then there exists a commutative diagram of projective morphisms
\begin{align*}
    \xymatrix{
\mathcal X \ar[rr]^{\psi} \ar[dr]_{\pi}
& 
& 
\mathcal Y \ar[dl]^{\phi}
\\
& 
T
& 
    }
\end{align*}
such that 
$\phi$ is a flat deformation of 
$\mathcal Y_0$ and $\psi\vert_{\mathcal X_0}=\psi_0$.
\end{theorem}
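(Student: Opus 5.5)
The plan is to deform the line bundle that induces $\psi_0$, show that it remains relatively semiample, and take the relative Proj.

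First, the deformation of the line bundle. Let $L_0=\psi_0^*A$ with $A$ ample on $\cY_0$. Since $H^2(\mathcal X_0,\mathcal O_{\mathcal X_0})=0$, the obstruction to deforming $L_0$ vanishes: the exponential sequence together with $R^2\pi_*\mathcal O_{\mathcal X}=0$ near $0$ shows that $\mathrm{Pic}(\mathcal X)\to \mathrm{Pic}(\mathcal X_0)$ is surjective after shrinking $T$. So $L_0$ extends to a line bundle $\mathcal L$ on $\mathcal X$. Because $\pi$ is smooth with $K$-torsion fibers, the spaces $H^i(\mathcal X_t,\mathcal L_t^{m})$ have the following behaviour.
\begin{itemize}
\item For $i>0$, Kollár's torsion-freeness and Kawamata--Viehweg-type vanishing (applied on $\mathcal X_0$ via the fibration) give $H^i(\mathcal X_0,L_0^m)=H^0(\cY_0,R^i\psi_{0*}\mathcal O\otimes A^m)$.
\item The sheaves $R^i\psi_{0*}\mathcal O_{\mathcal X_0}$ are torsion free, and in fact $R^i\psi_{0*}\omega_{\mathcal X_0}$ differ only by torsion in $\mathrm{Pic}$.
\end{itemize}

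The key step, and the main obstacle, is showing that the dimensions $h^0(\mathcal X_t,\mathcal L_t^{m})$ are constant in $t$. Once that holds, cohomology and base change shows that $\pi_*\mathcal L^m$ is locally free and commutes with base change, so sections of $L_0^m$ lift. The base locus of $\mathcal L^m$ is closed and misses $\mathcal X_0$, so after shrinking $T$, $\mathcal L$ is $\pi$-semiample. Invariance of $h^0$ does not follow from vanishing, since higher cohomology need not vanish for a non-big line bundle. My first approach is a $T^1$-lifting argument in the style of Kawamata--Ran applied to the pair $(\mathcal X_0,L_0)$, or rather to the morphism $\psi_0$. I would show that deformations of $\psi_0$ (with $\cY_0$ also allowed to deform) are unobstructed and surject onto the deformations of $\mathcal X_0$.

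Concretely, I would consider the functor of deformations of $\psi_0$ and the forgetful map to $\Def(\mathcal X_0)$. The tangent map $T^1(\psi_0)\to H^1(T_{\mathcal X_0})$ must be surjective. I would obtain this through Hodge theory: identify $H^1(T_{\mathcal X_0})\cong H^1(\Omega^{n-1}_{\mathcal X_0})$ after trivializing $\omega$ on a cover, and use the Leray filtration for $\psi_0$ together with the $H^2(\mathcal O)=0$ hypothesis. The goal is to show that the obstruction for $T^1(\psi_0)\to H^1(T_{\mathcal X_0})$ lies in the image of cup product with $c_1(L_0)$ into $H^2(\mathcal O_{\mathcal X_0})=0$. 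In other words, a first-order deformation of $\mathcal X_0$ preserves the Hodge class $c_1(L_0)$ exactly when $\xi\cdot c_1(L_0)=0$ in $H^2(\mathcal O)$, and this holds automatically here. Lifting the deformation of the morphism then reduces to showing that Hodge classes pulled back from $\cY_0$ stay in the relative Leray piece $\psi^*H^2(\cY)$. This should follow from the decomposition theorem, since $R\psi_{0*}\mathbb Q$ is a variation of mixed Hodge structures that deforms locally trivially.

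The $T^1$-lifting property itself would then be verified over $A_n=\mathbb C[t]/t^{n+1}$. The same Hodge argument goes through in the relative setting because the Hodge numbers are constant, and Deligne degeneration holds for $\mathcal X_{A_n}\to \operatorname{Spec} A_n$ and for the fibration. Kawamata--Ran then gives unobstructedness, and smoothness of the forgetful map yields a formal, and by Artin approximation an analytic, family $\psi\colon\mathcal X\to\mathcal Y$ over the germ $T$. The target $\mathcal Y=\operatorname{Proj}_T\bigoplus_m\pi_*\mathcal L^m$ is flat over $T$ because $\pi_*\mathcal L^m$ is locally free, and $\psi|_{\mathcal X_0}=\psi_0$ by construction, which completes the proof.
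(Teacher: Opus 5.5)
Your outer frame (extend $L_0$ using $H^2(\mathcal{X}_0,\mathcal{O})=0$, show that sections of $L_0^{m}$ lift, take the relative $\mathrm{Proj}$, get flatness from local freeness of $\pi_*\mathcal{L}^m$) agrees with the paper. The gap is the step that carries all the content: it is only asserted, and the mechanism you sketch for it is the wrong one.

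\textbf{The obstruction is misidentified.} The condition $\xi\cup c_1(L_0)=0$ in $H^2(\mathcal{X}_0,\mathcal{O})$ is exactly the first-order condition for $L_0$ to deform along $\xi$, which your first step already handles. It says nothing about sections. With $K_X\sim 0$ (so $T_X\cong\Omega^{n-1}_X$ and $N_D\cong\omega_D$), the obstruction to lifting $\xi$ to a deformation of $(X,D)$, $D\in|L_0|$, is the image of $\xi$ under restriction $H^{n-1,1}(X)\to H^{n-1,1}(D)$. Its Gysin image in $H^{n,2}(X)\cong H^2(X,\mathcal{O}_X)$ is $\pm\xi\cup c_1(L_0)$. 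However, $H^{n-1,1}(D)\cong H^{n-2}(D,\mathcal{O}_D)^\vee$ is in general nonzero when $D$ is pulled back from a lower-dimensional base; this already happens for an elliptic Calabi--Yau threefold over $\mathbb{P}^2$ with $D$ the preimage of a plane cubic. So $H^2(\mathcal{O})=0$ does not control it.

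\textbf{The rest of the sketch does not fill this in.} The appeal to the Leray filtration and the decomposition theorem is circular, since the Leray piece $\psi^*H^2(\mathcal{Y})$ presupposes the deformed morphism. ``Hodge classes stay Hodge'' carries no information here, because $h^{2,0}=0$ makes every integral class in $H^2$ Hodge on every fiber. You also never identify a deformation module for $\psi_0$ (whose target deforms and is typically singular) that is free and compatible with base change, which is what Kawamata--Ran needs.

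\textbf{What the paper does instead.} It replaces the morphism by the pair $(X,D)$, with $D$ a general smooth member of $|L|$ and $L$ the pullback of a very ample bundle. The relative deformation module is $H^1(X',\Omega^{n-1}_{X'/A'}(\log D')(-D'))\cong\gr_F^{n-1}H^n_c(X'\setminus D',A')$. This is free and base-change compatible by Deligne's degeneration argument, which gives $T^1$-lifting and constant rank of the forgetful map (Lemma~\ref{lemma:1}). Surjectivity is the vanishing of $\gr_F^{n-1}H^n(X)\to\gr_F^{n-1}H^n(D)$ (Lemma~\ref{lemma:2}). That is proved by factoring through the universal divisor $\mathcal{D}\to|D|$ (a projective bundle over $X$), using global invariant cycles and relative hard Lefschetz to drop to $\gr_F^{n-2}$, and then $h^{n-2,0}(\mathcal{D})=h^{2,0}(X)=0$. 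Finally, $H^1(\mathcal{O}_X)=0$ makes $\mathrm{Pic}$ discrete, so the deformed divisor lies in $|\mathcal{L}_t|$.

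\textbf{The reduction is missing, and your key step is false as formulated.} The Hodge-theoretic identification needs $K_X\sim0$ and the last step needs $H^1(\mathcal{O}_X)=0$; neither is assumed. Without them, $h^0(\mathcal{L}_t^m)$ need not be constant for an arbitrary extension $\mathcal{L}$. For example, a bielliptic surface $S$ has $K_S$ torsion, $p_g=0$ and $q=1$. Let $\Psi\colon S\to\mathbb{P}^1$ be its elliptic fibration, and twist $\Psi^*\mathcal{O}_{\mathbb{P}^1}(1)$ by a nonconstant family $P_t\in\mathrm{Pic}^0(S)$ with $P_0$ trivial. This gives an extension on the trivial deformation $S\times T$ with $h^0=2$ at $t=0$ but $h^0=0$ for $t\neq 0$ near $0$ (compare Remark~\ref{rmk:numericalequiv-is-necessary}).

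So the extension has to be chosen. ``Trivializing $\omega$ on a cover'' does not rescue the Hodge argument, since $H^2(\mathcal{O})=0$ need not survive on the cover (Enriques $\to$ K3). The paper handles this through the general Theorem~\ref{thm:main_thm}:
\begin{enumerate}
\item an \'etale Beauville--Bogomolov cover deforms as a product;
\item strict Calabi--Yau factors are treated by the $T^1$ argument above, hyperk\"ahler factors by Matsushita, and tori by Lemma~\ref{lemma:bundle_ab_var};
\item one descends via $(\det q_*\mathcal{M}')^{\otimes 2}$ and Lemma~\ref{lem:roots}.
\end{enumerate}
Only after this does Corollary~\ref{cor:def_fibration} take the relative $\mathrm{Proj}$ as you do.
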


In the setup of \autoref{main.thm.intro_CY}, the vanishing condition 
$H^2(\cX_0,\mathcal O_{\cX_0})=0$
persists along any algebraic deformation of $\cX_0$ due to the upper semicontinuity of cohomology. This condition is used to ensure that any line bundle can be lifted to the total space $\mathcal X$.  Other than the elliptic case, \autoref{main.thm.intro_CY} was known for K3-fibrations of strict Calabi--Yau threefolds, but already unknown for abelian surface fibrations of threefolds (see \cite{wislonmin}*{\S 1}).

\medskip

The proof of \autoref{main.thm.intro_CY} is reduced to proving the claimed properties hold for the Kuranishi family over the miniversal deformation space
$\Def(\mathcal X_0)$ of 
$\mathcal X_0$:
indeed, any smooth proper family 
$\mathcal X \to T$
satisfying the properties in the statement of 
\autoref{main.thm.intro_CY}
is obtained by pulling back the universal Kuranishi family over 
$\Def(\mathcal X_0)$
through the germ of an analytic morphism 
$T \to \Def(\mathcal X_0)$ 
mapping 
$0 \in T$ 
to the distinguished point 
$\tilde 0 \in \Def(\mathcal X_0)$.
Subsequently, we utilize Hodge-theoretic methods to study the deformation theory of a general divisor 
$D_0$ 
in the linear system of a sufficiently high multiple of a very ample divisor on 
$\mathcal Y_0$ 
via the 
$T^1$-lifting 
criterion of Kawamata--Ran \cites{ran,Kaw92}: 
this result is used to show that the forgetful map from the deformation functor of the pair 
$(\mathcal X_0, D_0)$ 
is smooth onto its image in $\Def(\cX_0)$.  
Using the global invariant cycle theorem together with the vanishing condition \(H^2(\mathcal X_0, \mathcal O_{\mathcal X_0})=0\), we show that the forgetful map is dominant. In particular, every deformation of \(\mathcal{X}_0\) lifts to a deformation of the pair \((\mathcal X_0, D_0)\).
In view of this, \autoref{main.thm.intro_CY} implies that if generalized abundance holds for a nef line bundle 
$L_0$ on $\mathcal X_0$, then it holds also for all deformations of $L_0$ over 
$\Def(\mathcal X_0)$.

\medskip

It is natural to ask  whether 
\autoref{main.thm.intro_CY} 
could be generalized to $K$-torsion varieties without assuming the cohomology vanishing condition. However, the non-simple abelian varieties and elliptic K3 surfaces examples already described show that fibrations need not deform, so a refined formulation is required. Motivated by the relation with semiample line bundles, we restrict to deformations $\mathcal X\to T$ of $X$ for which a semiample line bundle $L$ inducing the fibration deforms, i.e., there exists a line bundle $\mathcal L$ on $\mathcal X$ with $\mathcal L|_{\mathcal X_0}=L$. We then ask whether the fibration structure also deforms.
 
\medskip

In our second main result, we show that this refined version of the deformation problem for fibrations has an affirmative answer. 
More precisely, we prove that, in deformations of a pair $(X,L)$ such that $X$ is a $K$-torsion variety and $L$ a semiample line bundle, the semiampleness of $L$ persists up to numerical equivalence on all fibers of the deformation.

\begin{theorem}[cf.\ Theorem~2.1]
\label{main.thm.intro_general}
Let 
$\pi \colon \mathcal{X} \to T$ 
be a smooth proper family of $K$-torsion K\"ahler manifolds over an analytic germ $(0\in T)$. Let $\mathcal L$ be a line bundle on 
$\mathcal{X}$ for which $\mathcal L\vert_{\mathcal X_0}$
is semiample. Then there is a $\pi$-semiample line bundle $\cM$ on $\cX$ with $\cM|_{\cX_0}\cong \cL|_{\cX_0}$ (and therefore $\cM\equiv_T\cL$).
In particular, there exists a commutative diagram of proper morphisms
\[\begin{tikzcd}[ampersand replacement=\&]
	\cX \&\& \cY \\
	\& T
	\arrow["\psi", from=1-1, to=1-3]
	\arrow["\pi"', from=1-1, to=2-2]
	\arrow["\phi", from=1-3, to=2-2]
\end{tikzcd}\]
factoring $\pi$, 
and a $\phi$-ample line bundle 
$\mathcal N$ on $\mathcal Y$ such that 
$\psi^\ast \mathcal N|_{\cX_0}\cong \cL^{\otimes m}|_{\cX_0}$ for some $m>0$, where $\phi$ is a flat projective family. If, moreover, $\pi$ and $\cL$ are algebraizable, then the above diagram is algebraizable, and the line bundles $\cM$ and $\cN$ can be chosen to be algebraic.
\end{theorem}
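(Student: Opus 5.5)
We follow the strategy sketched after Theorem~\ref{main.thm.intro_CY}. First, we replace $\cL$ by a sufficiently divisible power, so that $L_0\coloneqq\cL|_{\cX_0}$ is base point free and induces the fibration $\psi_0\colon\cX_0\to\cY_0$ with $L_0=\psi_0^*A_0$, $A_0$ very ample on $\cY_0$. We then choose a general member $D_0\in|\psi_0^*A_0^{\otimes k}|$ for $k\gg0$. Such a $D_0$ is smooth, is the pullback of a general smooth hypersurface $H_0\subset\cY_0$, and $\cO(D_0)\cong L_0^{\otimes k}$.

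The core of the argument is to show that $D_0$ deforms along $T$ inside the locus where $\cL^{\otimes k}$ deforms. Consider the forgetful map of deformation functors $\Def(\cX_0,D_0)\to\Def(\cX_0)$. We will prove two things. First, $\Def(\cX_0,D_0)$ is unobstructed, by the $T^1$-lifting criterion of Kawamata--Ran. For $K$-torsion $\cX_0$ the tangent space is $H^1(\cX_0,T_{\cX_0}(-\log D_0))\cong H^1(\Omega^{n-1}_{\cX_0}(\log D_0)\otimes\omega^{-1})$, and $\omega$ is torsion; after passing to the index-one cover we may assume $\omega$ is trivial. The dimension of this space is then governed by the mixed Hodge structure on $H^n(\cX_0\setminus D_0)$. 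Degeneration of the log Hodge-to-de Rham spectral sequence makes these dimensions constant in infinitesimal families, which is exactly the $T^1$-lifting property.

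Second, we must show that the image of the forgetful map is the Noether--Lefschetz-type locus of $\Def(\cX_0)$ where the class $c_1(L_0)$ stays of type $(1,1)$. By unobstructedness, the image is a smooth germ. Its tangent space is the kernel of $H^1(T_{\cX_0})\to H^2(\cO_{\cX_0})$ given by cup product with $c_1(D_0)$. This kernel is the tangent space to the Hodge locus of $c_1(L_0)$, and that Hodge locus is smooth, since $K$-torsion varieties are unobstructed (Bogomolov--Tian--Todorov). The two germs therefore agree.

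The map $T\to\Def(\cX_0)$ lands in the Hodge locus because $\cL$ exists on $\cX$. Hence $D_0$ extends to a relative divisor $\cD\subset\cX$ flat over $T$. Setting $\cM'\coloneqq\cO(\cD)$, we get $\cM'|_{\cX_0}\cong L_0^{\otimes k}$, and we take $\cM$ to be a suitable root (or simply work with $\cM'$ after renormalizing). I expect the main obstacle to be exactly this image identification; the $T^1$-lifting itself should be routine.

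It remains to show that the extended bundle is relatively semiample. Sections of $L_0^{\otimes mk}$ all come from $H^0(\cY_0,A_0^{mk})$. I will show that $h^0(\cX_t,\cM'^{\otimes m}_t)$ is constant in $t$, so that all sections lift by Grauert. The higher cohomology $H^i(\cX_0,\psi_0^*A_0^{m})$ is computed by Kollár's torsion-freeness and decomposition theorem as $\bigoplus_j H^i(\cY_0,R^j\psi_{0*}\omega\otimes A_0^m)$, which vanishes for $i>0$ and $m\gg0$ by Serre vanishing. With $\omega$ torsion, we twist by the appropriate roots to reach this setting. Upper semicontinuity then gives constancy of $h^0$ nearby, so sections lift and base-point-freeness is an open condition. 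This makes $\cM'$ $\pi$-semiample.

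We then let $\cY\coloneqq\operatorname{Proj}_T\bigoplus_m\pi_*\cM'^{\otimes m}$. Flatness of $\phi$ follows from the constancy of $h^0$, and $\cN\coloneqq\cO_{\cY}(1)$ has the stated property. Since $\cM'\equiv_T\cL^{\otimes k}$ by continuity of Chern classes, this gives the numerical statement. Algebraizability follows by GAGA and Artin approximation applied to the Hilbert scheme of $\cD$.
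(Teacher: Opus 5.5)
Your proposal has a genuine gap, and the decisive one is in the semiampleness step, not the step you expected to be hardest. The vanishing you invoke is false. For $K_{\mathcal X_0}\sim 0$, Koll\'ar's decomposition reads
\[
H^i(\mathcal X_0,\psi_0^*A_0^{m})\;\cong\;\bigoplus_{j} H^{i-j}\bigl(\mathcal Y_0,R^j\psi_{0*}\omega_{\mathcal X_0}\otimes A_0^{m}\bigr).
\]
Serre vanishing kills only the summands with $j<i$; the summand $H^0(\mathcal Y_0,R^i\psi_{0*}\omega_{\mathcal X_0}\otimes A_0^{m})$ survives. For $i=d:=\dim\mathcal X_0-\dim\mathcal Y_0$ the sheaf $R^d\psi_{0*}\omega_{\mathcal X_0}$ has rank one, so $H^d(\mathcal X_0,L_0^{m})\neq 0$ for $m\gg0$ whenever $\dim\mathcal Y_0<\dim\mathcal X_0$. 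Upper semicontinuity therefore gives no lower bound on $h^0(\mathcal M'^{\otimes m}_t)$.

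In fact your argument uses only the central fiber. It would therefore show that every deformation of $L_0^{\otimes k}$, for instance $\mathcal L^{\otimes k}$ itself, is $\pi$-semiample. The Poincar\'e bundle of \autoref{rmk:numericalequiv-is-necessary} contradicts this: there $h^0$ drops from $1$ to $0$, and the culprit is exactly the surviving term $H^1(E,\mathcal O_E)=H^0(R^1\psi_{0*}\omega_E)$.

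What you actually need is that all of $H^0(L_0^{\otimes k})$ lifts to the one bundle $\mathcal O(\mathcal D)$. Extending a single general divisor gives a single section. Extending other members of $|L_0^{\otimes k}|$ produces divisors whose line bundles may differ from $\mathcal O(\mathcal D)$ by elements of $\mathrm{Pic}^0(\mathcal X/T)$ that are trivial at $0$. So you do not obtain a base-point-free system of a single bundle.

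The paper's step ``a general section lifts, hence every section lifts'' relies on discreteness of $\mathrm{Pic}$, i.e.\ $H^1(\mathcal O)=0$. This is why the paper passes to the Beauville--Bogomolov cover and handles each factor type separately:
\begin{itemize}
\item strict Calabi--Yau factors by \autoref{main_prop};
\item symplectic factors by Matsushita, or by Kawamata--Viehweg when the bundle is big;
\item tori by \autoref{lemma:bundle_ab_var}.
\end{itemize}
It then descends via $(\det q_*\mathcal M')^{\otimes 2}$ and \autoref{lem:roots}. Your flatness claim for $\phi$ rests on constancy of $h^0$, so it falls with this step. The paper instead proves flatness factor by factor and uses the trace map for the $G$-quotient.

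The image identification, which you rightly call the main obstacle, is asserted rather than proved. The tangent map of $\Def(\mathcal X_0,D_0)\to\Def(\mathcal X_0)$ has image $\ker\bigl(H^1(T_{\mathcal X_0})\to H^1(N_{D_0/\mathcal X_0})\bigr)$. Cup product with $c_1(\mathcal O(D_0))$ factors as $H^1(T_{\mathcal X_0})\to H^1(N_{D_0/\mathcal X_0})\to H^2(\mathcal O_{\mathcal X_0})$. Hence this image is a priori only contained in the tangent space of the Hodge locus, not equal to it.

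For $K$ trivial, equality means that the restriction $H^{n-1,1}(\mathcal X_0)\to H^{n-1,1}(D_0)$ vanishes on the kernel of $\cup\, c_1(L_0)$. This is a real Hodge-theoretic statement. \autoref{lemma:2} is its case $h^{2,0}=0$, where the restriction is identically zero by the global invariant cycle theorem. In general you must supply an argument. One route is as follows:
\begin{itemize}
\item The reduction in \autoref{lemma:2} gives $\alpha|_{D_0}=(h\beta)|_{D_0}$, with $h$ a K\"ahler class and $\beta\in H^{n-2,0}(\mathcal X_0)$.
\item Then $h\beta\cup c_1(\mathcal O(D_0))=\alpha\cup c_1(\mathcal O(D_0))=0$, so $\int_{D_0}h\beta\bar\beta=0$.
\item Hodge--Riemann on $D_0$ then forces $\beta|_{D_0}=0$.
\end{itemize}

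Two smaller points. First, Bogomolov--Tian--Todorov does not give smoothness of Hodge loci. You do not need that once the tangent spaces agree, but you do need the constant-rank part of \autoref{lemma:1} to know the image is a smooth germ; unobstructedness of the pair alone does not give this. Second, the correct identification is $T_X(-\log D)\cong\Omega_X^{n-1}(\log D)(-D)\otimes\omega_X^{-1}$; you dropped the twist by $-D$.
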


The $\pi$-semiample deformation $\cM$ of $\cL|_{\cX_0}$ is in fact homologically equivalent to $\cL$.  Note that $\cL$ itself may well not be $\pi$-semiample. Indeed, even for abelian varieties, the relative semiampleness of an arbitrary deformation $\cL$ of a semiample line bundle $\cL|_{\cX_0}$ may fail. For example, the deformation of the trivial line bundle induced by the Poincar\'e bundle on dual abelian varieties provides such a counterexample; see \autoref{rmk:numericalequiv-is-necessary}.

\medskip

The proof of \autoref{main.thm.intro_general} proceeds by passing to a suitable \'etale cover and using the Beauville--Bogomolov decomposition. Thus, we are reduced to proving that \autoref{main.thm.intro_general} holds for the different factors of the decomposition:
strict Calabi--Yau varieties, irreducible symplectic, and abelian factors separately.
While the strict Calabi--Yau case follows at once from \autoref{main.thm.intro_CY}, the irreducible holomorphic symplectic case was proven by Matsushita \cite{Matsushita2016} who showed the persistence of Lagrangian fibrations under the type of deformations that we consider in ~\autoref{main.thm.intro_general}.
We also show that the theorem holds for smooth families of complex tori---a result certainly known to experts, for which we could not find a suitable reference.
The final step in the proof of \autoref{main.thm.intro_general} 
consists in showing that the numerically semiample line bundles constructed on the deformations of the various factors descend along the \'etale cover taken to invoke the Beauville--Bogomolov decomposition.

Analogously to the case of \autoref{main.thm.intro_CY}, \autoref{main.thm.intro_general} implies that if generalized abundance holds for a nef line bundle 
$L_0$ on $\mathcal X_0$, then it holds, up to numerical equivalence, for all deformations of $L_0$ 
in 
$\Def(\mathcal X_0)$.
It is important to remark, though, that in this case the locus in 
$\Def(\mathcal X_0)$
where 
$L_0$
deforms is in general a strict analytic subvariety.

\medskip

Finally, we study deformations of subvarieties with trivial normal bundle inside $K$-trivial varieties.
While such subvarieties often give rise to fibrations, we exhibit examples showing that their deformations can be obstructed in general.

\medskip

{\it Structure of the paper}.
The paper is organized as follows. 
In \autoref{section:2}, we prove the two main results on deformations result for fibrations on \( K \)-torsion varieties. 
In \autoref{section:3}, we discuss deformations of subvarieties with trivial normal bundle and provide several examples displaying the various obstructions in general.

%% file: deformation_arxiv_v1.tex
\section{Deforming fibrations on $K$-torsion varieties}\label{section:2}

A {\it fibration} is a proper morphism of normal analytic varieties $f \colon X\to Y$ such that $\cO_Y\cong f_\ast\cO_X$.
A {\it $K$-trivial variety} is a smooth proper variety such that $K_X \sim 0$. A {\it $K$-torsion variety} is a smooth proper variety such that $K_X \sim_{\mathbb Q} 0$.

An \emph{analytic germ} $(0 \in T)$ is the equivalence class of a complex analytic space $T$ together with a point $0 \in T$, where two pairs $(T,0)$ and $(T',0')$ are equivalent if there exist open neighborhoods $U \subset T$ of $0$ and $U' \subset T'$ of $0'$ that are isomorphic as analytic spaces. Accordingly, when working with a germ, we fix a representative and shrink it whenever necessary.

In the following, by a smooth proper family $\pi:\mathcal{X}\to (0\in T)$ of $K$-torsion K\"ahler (resp.\ projective) varieties over an analytic germ, we mean a smooth proper morphism whose fibers are $K$-torsion and K\"ahler (resp.\ projective). Note that for a smooth proper family over a germ, the special fiber is K\"ahler (resp. $K$-torsion) if and only if every fiber is.
\smallskip
The goal of this section is to prove the following theorem.

\begin{theorem}\label{thm:main_thm}
    Let $\pi \colon \mathcal{X} \to (0\in T)$ be a smooth proper family of $K$-torsion K\"ahler varieties over an analytic germ.
    Further assume that there is a line bundle $\mathcal{L}$ on $\mathcal{X}$ such that $\mathcal{L}|_{\mathcal{X}_0}$ is semiample.
    Then, there is a $\pi$-semiample line bundle $\mathcal{M}$ such that $\mathcal{M}|_{\cX_0}\cong \mathcal L|_{\cX_0}$. If, moreover, $\pi$ and $\cL$ are algebraizable, then $\cM$ may be taken algebraizable as well.
\end{theorem}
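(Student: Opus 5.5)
We follow the strategy outlined in the introduction: reduce via the Beauville--Bogomolov decomposition to the three types of factors, treat each factor separately, and then descend along the étale cover. First, since $\cX_0$ is a compact Kähler manifold with $K_{\cX_0}\sim_{\mathbb Q}0$, the Beauville--Bogomolov theorem gives a finite étale Galois cover $X'_0\to\cX_0$, with group $G$, such that $X'_0\cong A_0\times\prod_i Y_{i,0}\times\prod_j Z_{j,0}$. Here $A_0$ is a complex torus, the $Y_{i,0}$ are strict Calabi--Yau manifolds (so $H^2(\cO)=0$), and the $Z_{j,0}$ are irreducible holomorphic symplectic. Étale covers extend uniquely over the germ, so we get a $G$-equivariant étale cover $\cX'\to\cX$. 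Because $H^1(Y_{i,0},\cO)=H^1(Z_{j,0},\cO)=0$ and the factors are simply connected, the product decomposition is rigid in families: the Kuranishi space of a product with these factors is the product of the Kuranishi spaces. Hence $\cX'\cong\cA\times_T\prod\cY_i\times_T\prod\cZ_j$ over $T$. Next we decompose the pulled-back line bundle $\cL'$. On $X'_0$ we have $\operatorname{Pic}(X'_0)=\operatorname{Pic}(A_0)\times\prod\operatorname{Pic}(Y_{i,0})\times\prod\operatorname{Pic}(Z_{j,0})$, using the vanishing of $H^1(\cO)$ on the non-torus factors. We expect the same decomposition to hold relatively over $T$, possibly after twisting by a line bundle topologically trivial on fibers. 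The restriction of $\cL'$ to each factor is a pullback of $\cL'|_{X'_0}$ to a slice, so it is semiample on the special fiber.

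We then treat the three cases. For the $Y_i$ factors, $H^2(\cO)=0$, and we prove directly the $T^1$-lifting statement of the introduction. Take a general divisor $D_0$ in a high multiple of an ample class on the base of the fibration, and apply Kawamata--Ran: the relevant $T^1$ modules of the pair $(Y_{i,0},D_0)$ are computed from $H^1(T_{Y}(-\log D))\cong H^1(\Omega^{n-1}_Y(\log D))$, using $K\sim0$. Their deformation-invariance follows from Deligne's degeneration for log pairs, so $\Def(Y_{i,0},D_0)\to\Def(Y_{i,0})$ is smooth onto its image. The global invariant cycle theorem, together with the fact that $\cO(D_0)$ lifts (since $H^2(\cO)=0$), shows that the class of $D_0$ remains of type $(1,1)$ and that the map is surjective. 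The relative divisor then gives a relatively semiample bundle, obtained by taking the relative linear system and using the vanishing of $R^1$ to get base-point freeness via cohomology and base change. For the $Z_j$ factors we invoke Matsushita's theorem. For the torus factor, a semiample $L_0$ on $A_0$ is the pullback of an ample bundle on a quotient torus $A_0/K_0$. The sub-torus $K_0$ deforms exactly where $c_1(\cL)$ stays $(1,1)$, which holds over all of $T$, and then $\cA/\cK$ carries a relatively ample bundle $\cN$. We adjust $\cL$ by an element of $\operatorname{Pic}^0$ so that it is pulled back from $\cA/\cK$.

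To descend, we form $\cM'=\boxtimes\cM'_\bullet$, which is relatively semiample with $\cM'|_{X'_0}\cong\cL'|_{X'_0}$. We need a $G$-linearization, or at least a $G$-invariant choice of $\cM'$ up to replacing it by a power. The modification from $\cL'$ to $\cM'$ lives in the relative $\operatorname{Pic}^0$ of the torus part, so we can average over $G$, which is possible since the set of such modifications is a torsor under a group on which averaging makes sense. The hardest step is ensuring that the descended bundle restricts to exactly $\cL|_{\cX_0}$ and not just a power; we handle this by choosing a canonical (e.g. trivial-on-the-zero-section) normalization that is $G$-equivariant. Algebraizability follows from Grothendieck existence applied to the constructed relatively ample data.
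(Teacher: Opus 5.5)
Your architecture is the paper's: the Beauville--Bogomolov cover extended over $T$, K\"unneth rigidity of the product, splitting of the pulled-back bundle, factor-by-factor treatment, then descent. However, the step you flag as hardest does not go through as proposed.

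A ``trivial-on-the-zero-section'' rigidification cannot be $G$-equivariant. $G$ acts freely on $\mathcal{X}'$, so there is no invariant section to rigidify along; on the torus factor $G$ typically acts with nontrivial translation part, as for bielliptic surfaces. If you instead mean normalizing the correction in $\operatorname{Pic}^0(\mathcal{X}'/T)$ to vanish at $t=0$, that only makes the isomorphism class of $\mathcal{M}'$ $G$-invariant. It neither provides a $G$-linearization nor controls which one you get. Different linearizations descend to bundles on $\mathcal{X}_0$ that differ from $\mathcal{L}|_{\mathcal{X}_0}$ by torsion bundles attached to characters of $G$.

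The missing ingredient is the root-extraction \autoref{lem:roots}. If $\mathcal{M}|_{\mathcal{X}_0}\cong\mathcal{L}^{\otimes m}|_{\mathcal{X}_0}$ and $\mathcal{M}\equiv_T\mathcal{L}^{\otimes m}$, then, since $[m]$ on $\operatorname{Pic}^0(\mathcal{X}/T)$ is finite \'etale, the section $\mathcal{L}^{-m}\otimes\mathcal{M}$ has an $m$-th root through $0$. This gives $\mathcal{L}'$ with $\mathcal{L}'|_{\mathcal{X}_0}\cong\mathcal{L}|_{\mathcal{X}_0}$ and a power of $\mathcal{L}'$ equal to a power of $\mathcal{M}$. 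With this lemma no linearization is needed. The paper takes the norm $\mathcal{M}=(\det q_*\mathcal{M}')^{\otimes 2}$:
\begin{itemize}
\item its pullback is $(\bigotimes_{g}g^*\mathcal{M}')^{\otimes 2}$ (this is your averaging), which makes $\mathcal{M}$ relatively semiample;
\item it equals $\mathcal{L}^{\otimes 2|G|}$ on $\mathcal{X}_0$, because the square of the determinant of the regular representation is trivial;
\item the lemma then recovers exactly $\mathcal{L}|_{\mathcal{X}_0}$.
\end{itemize}
You also need this lemma tacitly in your torus step, where only a power of $\mathcal{L}_0$ descends to $A_0/K_0$. Alternatively, your route can be repaired directly. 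The linearization obstruction lies in $H^2(G,H^0(T,\mathcal{O}_T^{\ast}))\cong H^2(G,\mathbb{C}^\ast)$. This is detected on $\mathcal{X}_0$, where $q^*\mathcal{L}_0$ is linearized, so it vanishes, and a constant character then fixes the central fiber.

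The factor-level arguments also need correction.

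\textbf{Strict Calabi--Yau factors.} The $T^1$-module is $H^1(T_Y(-\log D))\cong H^1(\Omega^{n-1}_Y(\log D)(-D))$, not $H^1(\Omega^{n-1}_Y(\log D))$. More importantly, dominance of $\operatorname{Def}(Y,D)\to\operatorname{Def}(Y)$ is equivalent to the vanishing of $\operatorname{gr}_F^{n-1}H^n(Y)\to\operatorname{gr}_F^{n-1}H^n(D)$. That $[D_0]$ stays of type $(1,1)$ is automatic and beside the point. The paper proves this vanishing in \autoref{lemma:2} as follows:
\begin{itemize}
\item it factors the map through the monodromy invariants of the universal family $g^\circ\colon\mathcal{D}^\circ\to S^\circ$ of smooth members of $|D|$;
\item relative hard Lefschetz reduces to $\operatorname{gr}_F^{n-2}H^0(S^\circ,R^{n-2}g^\circ_*\mathbb{C})$;
\item this is a quotient of $\operatorname{gr}_F^{n-2}H^{n-2}(\mathcal{D},\mathbb{C})$, which vanishes because $h^{n-2,0}(\mathcal{D})=h^{0,2}(Y)=0$.
\end{itemize}
That, not the lifting of $\mathcal{O}(D_0)$, is where $H^2(\mathcal{O})=0$ really enters.

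\textbf{Base-point freeness.} Your appeal to a vanishing of $R^1$ is unfounded for non-big $L$. For an elliptic Calabi--Yau threefold $f\colon X\to\mathbb{P}^2$ and $L=f^*\mathcal{O}(3)$, one has $R^1f_*\mathcal{O}_X\cong\omega_{\mathbb{P}^2}$, which forces $H^1(X,L)\neq 0$. The paper argues differently: a general $D$ deforms, $\operatorname{Pic}(\mathcal{X}_t)$ is discrete so $D_t\in|\mathcal{L}_t|$, hence every section lifts.

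\textbf{Hyperk\"ahler factors.} Matsushita's theorem only covers the Lagrangian case. For big $L$ you need a separate argument; the paper uses Kawamata--Viehweg vanishing plus cohomology and base change.
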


\begin{remark}\label{rmk:numericalequiv-is-necessary}
    As $R^2\pi_*\bZ_{\cX}$ is a local system on $T$, the two deformations $\cL$ and $\cM$ are homologically equivalent. However, in general, $\mathcal{L}$ itself might not be $\pi$-semiample. For example, let $E$ be an elliptic curve and let $\mathcal{L}$ be the Poincar\'e line bundle on $E \times E$ normalized so that $\mathcal{L}|_{E \times \{0\}} = \mathcal{O}_E$.
    Then, $\mathcal{L}$ is only homologically trivial along the fibers of the projection onto the second factor $\pi_2 \colon E \times E \to E$, but not relatively semiample.
    In this case, we may then take $\mathcal{M} = \mathcal{O}_{E \times E}$.
\end{remark}

From \autoref{thm:main_thm} and its proof, we draw the following corollaries.  
Here and throughout, we denote by $\mathcal{X}\to\Def(X)$ the miniversal deformation in the analytic category (that is, the Kuranishi family of $X$) which for smooth proper K\"ahler $X$ is a smooth proper family of K\"ahler varieties over an analytic germ.  
Note that if in addition $H^2(X,\mathcal{O}_X)=0$, then 
$\mathcal{X}\to\Def(X)$ is a projective family, by Kodaira's embedding theorem.

\begin{corollary}\label{cor:H2=0}
    Let $X$ be a smooth projective $K$-torsion $n$-fold with $H^2(X,\cO_X)=0$ and $L$ a semiample line bundle on $X$.
    Then $L$ deforms to a relatively semiample line bundle $\cL$ on the miniversal deformation $\cX \to \Def(X)$.
\end{corollary}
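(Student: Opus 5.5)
The plan is to reduce to \autoref{thm:main_thm}. The one extra input needed is that, on the miniversal family, $L$ extends to \emph{some} line bundle on the total space. Concretely, let $\pi\colon\cX\to\Def(X)$ be the Kuranishi family and shrink $\Def(X)$ to a contractible Stein representative. By upper semicontinuity, $H^2(\cX_t,\cO_{\cX_t})=0$ for all $t$. Since $\pi$ is smooth and proper, Grauert's base change theorem then gives $R^2\pi_*\cO_\cX=0$ near $0$, and also $R^2\pi_*\cO_\cX\otimes k(0)\to H^2(X,\cO_X)$ is an isomorphism.

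Next I would run the exponential sequence on $\cX$. It gives
\[
H^1(\cX,\cO_\cX^*)\to H^2(\cX,\bZ)\to H^2(\cX,\cO_\cX).
\]
Because $\Def(X)$ is Stein, the Leray spectral sequence yields $H^2(\cX,\cO_\cX)=H^0(\Def(X),R^2\pi_*\cO_\cX)=0$. Because $\Def(X)$ is contractible and $\pi$ is proper and smooth, Ehresmann gives $H^2(\cX,\bZ)\cong H^2(X,\bZ)$. Together these show that $c_1(L)$ lifts to a line bundle $\cL'$ on $\cX$.

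The restriction $\cL'|_X$ then has the same Chern class as $L$, so $L\otimes\cL'|_X^{-1}\in\mathrm{Pic}^0(X)$. To correct this, I would use that $H^1(\cX,\cO_\cX)\to H^1(X,\cO_X)$ is surjective. This holds because $R^1\pi_*\cO_\cX$ is locally free: the Hodge numbers are constant in a Kähler family, so base change applies. Hence every element of $\mathrm{Pic}^0(X)=H^1(X,\cO_X)/H^1(X,\bZ)$ extends to $\cX$. Twisting $\cL'$ by such an extension gives a line bundle $\cL$ with $\cL|_X\cong L$. I expect this extension step to be the only real content; the rest is formal.

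Finally I would apply \autoref{thm:main_thm} to $\pi$ and $\cL$. The hypotheses are met: $\pi$ is a smooth proper family of $K$-torsion Kähler manifolds over a germ, since $X$ is projective hence Kähler, and $\cL|_{\cX_0}=L$ is semiample. The theorem produces a $\pi$-semiample $\cM$ with $\cM|_X\cong L$, which is the required relatively semiample deformation of $L$. As noted before the corollary, $H^2(X,\cO_X)=0$ makes the family projective, so nothing beyond the Kähler setting is needed.
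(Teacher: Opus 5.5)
Your proposal is correct and follows the same route as the paper: first extend $L$ to a line bundle $\cL$ on $\cX$ with $\cL|_{X}\cong L$ using $H^2(X,\cO_X)=0$, then apply \autoref{thm:main_thm}. The paper obtains the extension by citing the infinitesimal obstruction theory for line bundles (Hartshorne, Theorem 6.4); your argument (exponential sequence on a contractible Stein representative, followed by a $\mathrm{Pic}^0$ correction via surjectivity of $H^1(\cX,\cO_\cX)\to H^1(X,\cO_X)$) is a valid and more explicitly analytic justification of that same step.
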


\begin{corollary}\label{cor:def_fibration}
Let $X$ be a smooth projective $K$-torsion $n$-fold with $H^2(X,\cO_X)=0$ admitting a fibration $f \colon X \to Y$ with $Y$ projective,
and let $\cX \to \Def(X)$ be the miniversal deformation of $X$. 
Then $f$ deforms to a fibration $\phi \colon \cX \to \cY$, where $\cY$ is a flat projective deformation of $Y$ over $\Def(X)$.
\end{corollary}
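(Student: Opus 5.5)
We deduce Corollary~\ref{cor:def_fibration} from Corollary~\ref{cor:H2=0}, that is, from Theorem~\ref{thm:main_thm} applied to the Kuranishi family. The plan has three steps: pick a line bundle on $X$ defining $f$, deform it to a relatively semiample bundle, and take the relative Stein factorization of the resulting morphism.

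First we set up the line bundle. Choose a very ample line bundle $A$ on $Y$ and put $L \coloneqq f^\ast A$. Since $f_\ast\cO_X\cong\cO_Y$, the bundle $L$ is semiample and its section ring is
\[
\bigoplus_m H^0(X,L^{\otimes m}) = \bigoplus_m H^0(Y,A^{\otimes m}).
\]
Hence $f$ is the fibration induced by $L$, with $Y = \operatorname{Proj}\bigoplus_m H^0(X,L^{\otimes m})$.

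Next we deform $L$. Because $H^2(X,\cO_X)=0$, the family $\cX\to\Def(X)$ is projective and $L$ extends to a line bundle on $\cX$. Indeed, $\Def(X)$ is a contractible germ and $R^2\pi_\ast\cO_{\cX}=0$ by semicontinuity, so the exponential sequence shows $\Pic(\cX)\to\Pic(X)$ is surjective after shrinking. Corollary~\ref{cor:H2=0} then gives a $\pi$-semiample $\cL$ on $\cX$ with $\cL|_X\cong L$.

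Finally we construct $\cY$ and $\phi$. Choose $m$ so that $\cL^{\otimes m}$ is $\pi$-globally generated. This yields a $\Def(X)$-morphism $\cX\to\bP_{\Def(X)}(\pi_\ast\cL^{\otimes m})$, and we take its Stein factorization $\phi\colon\cX\to\cY$, so that $\phi_\ast\cO_{\cX}=\cO_{\cY}$ and $\cY\to\Def(X)$ is projective. Equivalently, $\cY=\operatorname{Proj}_{\Def(X)}\bigoplus_k\pi_\ast\cL^{\otimes k}$, which is finitely generated by relative semiampleness.

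The main obstacle is showing that $\cY\to\Def(X)$ is flat with central fiber $Y$ and $\phi|_X=f$; this requires base change for $\pi_\ast\cL^{\otimes k}$. We use that $L^{\otimes k}$ is a pullback of a nef bundle under a fibration of a $K$-torsion variety. Write $L^{\otimes k}-K_X\equiv L^{\otimes k}$; this is nef and semiample, and it is $f$-trivial. Koll\'ar's torsion-freeness and vanishing theorems (applied on a finite cover where $K_X\sim0$, or with $K_X\sim_{\mathbb Q}0$ directly) give
\[
H^i(X,L^{\otimes k}) = H^i(Y, A^{\otimes k}\otimes R^0f_\ast\omega_X\otimes\omega_X^{-1}\ldots).
\]
More simply, for $k\gg0$ Koll\'ar vanishing gives $H^i(Y,A^{\otimes k}\otimes R^jf_\ast\omega_X)=0$ for $i>0$, so by Leray $h^0(X,L^{\otimes k})=\chi(X,L^{\otimes k})$ up to the $K$-torsion twist. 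The same holds on every fiber $\cX_t$ with $\cL_t$, because $\cL_t$ is semiample with the same numerical class. Thus $h^0(\cX_t,\cL_t^{\otimes k})$ is locally constant in $t$, being a deformation-invariant Euler characteristic. If the $K$-torsion twist causes trouble, we replace $\cL$ by $\cL^{\otimes N}$ and pass to the index-one cover.

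By Grauert, $\pi_\ast\cL^{\otimes k}$ is then locally free and commutes with base change for all $k\gg0$ divisible enough. Consequently $\bigoplus_k\pi_\ast\cL^{\otimes k}$ is a flat graded algebra whose fiber at $0$ is $\bigoplus_k H^0(Y,A^{\otimes k})$ in sufficiently divisible degrees. Its relative Proj $\cY$ is therefore flat over $\Def(X)$ with $\cY_0\cong Y$, and $\phi|_X=f$. Moreover $\phi_\ast\cO_{\cX}=\cO_{\cY}$, since this holds on the central fiber and flatness lets it spread out after shrinking; alternatively it follows directly from the construction as a relative Proj of the section ring. This proves that $\phi$ is a fibration deforming $f$.
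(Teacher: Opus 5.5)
There is a genuine gap at the step you flag as the main obstacle: the local constancy of $h^0(\cX_t,\cL_t^{\otimes k})$. Your argument rests on $h^0(X,L^{\otimes k})=\chi(X,L^{\otimes k})$, and this fails in general once $L$ is not big.

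Koll\'ar vanishing only kills the Leray terms $H^i(Y,A^{\otimes k}\otimes R^jf_*\omega_X)$ with $i>0$. Combined with Koll\'ar's splitting of $Rf_*\omega_X$, it gives $H^j(X,\omega_X\otimes L^{\otimes k})\cong H^0(Y,A^{\otimes k}\otimes R^jf_*\omega_X)$. This is typically nonzero for $1\le j\le \dim X-\dim Y$; for instance, the top one is $H^0(Y,A^{\otimes k}\otimes\omega_Y)$ when $Y$ is smooth.

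Concretely, take a K3-fibered strict Calabi--Yau threefold $f\colon X\to\bP^1$, such as the bidegree $(4,2)$ divisor of Section~\ref{section:3}, which has $H^2(X,\cO_X)=0$, and let $L=f^*\cO_{\bP^1}(1)$. Then $h^0(X,L^{\otimes k})=k+1$ and $h^2(X,L^{\otimes k})=h^0(\bP^1,\cO_{\bP^1}(k-2))=k-1$, while $\chi(X,L^{\otimes k})=k\,(L\cdot c_2(X))/12=2k$. So deformation invariance of $\chi$ says nothing about $h^0$, and your appeal to Grauert has no input. Passing to powers of $\cL$ or to the index-one cover does not change this. The vanishing you want holds only when $L$ is big, where Kawamata--Viehweg applies.

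What you actually need is that every section of $L^{\otimes k}$ extends over $\cX$, i.e.\ that $(\pi_*\cL^{\otimes k})\otimes k(0)\to H^0(X,L^{\otimes k})$ is surjective. Cohomology and base change then makes $\pi_*\cL^{\otimes k}$ locally free and compatible with base change. The rest of your argument (relative $\mathrm{Proj}$, flatness, $\cY_0\cong Y$, $\phi|_X=f$) then goes through.

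This is how the paper argues. It extracts the surjectivity from the proof of Proposition~\ref{main_prop}: the smooth dominant map $\Def(X,D)\to\Def(X)$ deforms a general $D\in|L|$ to some $D_t\subset\cX_t$, and discreteness of $\mathrm{Pic}(\cX_t)$ forces $D_t\in|\cL_t|$.

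If you prefer to keep a Hodge-theoretic invariance argument, use the correct statement instead. If $\cM^{\otimes N}\cong\cO_{\cX}(\cD)$ with $\cD$ smooth over $T$, then $R^i\pi_*(\omega_{\cX/T}\otimes\cM)$ is locally free and commutes with base change; this follows from the cyclic covering trick plus Deligne's theorem. Apply it to $\cM=\omega_{\cX/T}^{-1}\otimes\cL^{\otimes k}$, using that $\omega_{\cX/T}$ is torsion and $\cL$ is $\pi$-semiample, so a suitable power of $\cM$ is a power of $\cL$ with a relatively smooth member. This gives constancy of $h^0(\cX_t,\cL_t^{\otimes k})$.
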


\subsection{Deforming fibrations on Calabi--Yau varieties}

The main technical step towards proving 
\autoref{thm:main_thm} in general
is the following special case of 
\autoref{thm:main_thm}.

\begin{proposition}\label{main_prop}
Let $\pi\colon \mathcal{X} \to (0\in T)$ and $\mathcal{L}$ be as \autoref{thm:main_thm} and let $X = \mathcal{X}_0$ and $L = \mathcal{L}_0$.  Then, the conclusion of \textup{\autoref{thm:main_thm}} holds under the additional assumptions that $$H^1(X, \cO_X) \ =\  H^2(X,\cO_X) \ =\  0 \quad \textup{and} \quad K_X \sim 0.$$
\end{proposition}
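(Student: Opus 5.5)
We follow the strategy sketched in the introduction. First we reduce to the Kuranishi family. Since $H^2(X,\cO_X)=0$, the family $\cX\to\Def(X)$ is projective, and every line bundle on $X$ extends over it: the obstruction to extending $L$ lies in $H^2(X,\cO_X)$. Since $H^1(X,\cO_X)=0$, the extension is unique. Hence $\cL$ is, after shrinking $T$, the pullback of the unique extension of $L$ over $\Def(X)$. It therefore suffices to prove that this extension is relatively semiample over $\Def(X)$; we may then take $\cM=\cL$, since $\cL$ is forced to be the unique deformation.

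Let $f\colon X\to Y$ be the fibration induced by $L$. Choose a very ample $A$ on $Y$ with $f^*A=L^{\otimes m}$, and let $D=f^*H$ for a general $H\in|A|$. Then $D$ is smooth by Bertini, and it is a fibration over $H$ with normal bundle $\cO_D(D)=f^*\cO_H(H)$. The key claim is that the forgetful map $\Def(X,D)\to\Def(X)$ is surjective, i.e.\ $D$ deforms with every deformation of $X$. To prove this we use the $T^1$-lifting criterion of Kawamata--Ran. We must show that $\Def(X,D)$ is unobstructed and that its tangent space surjects onto $H^1(X,T_X)$.

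For the tangent space, the relevant sequence is
$$0\to T_X(-\log D)\to T_X\to N_{D/X}\to 0,$$
so surjectivity of $H^1(T_X(-\log D))\to H^1(T_X)$ amounts to the vanishing of the map $H^1(T_X)\to H^1(N_{D/X})$. Using $K_X\sim 0$, we identify $T_X\cong\Omega^{n-1}_X$ and $T_X(-\log D)\cong\Omega^{n-1}_X(\log D)(-D)$. The map $H^1(T_X)\to H^1(N_{D/X})$ is the obstruction to keeping the class $[D]$ of type $(1,1)$. Since $H^2(X,\cO_X)=0$, the class $c_1(L)$ stays algebraic, and this map vanishes by the argument in the first paragraph. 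For unobstructedness and the $T^1$-lifting property, we need the analogous statements over the Artinian thickenings $A_k=\mathbb{C}[t]/(t^{k+1})$. The Kawamata--Ran argument reduces this to local freeness, i.e.\ base change, of $H^q(X_{A_k},\Omega^p_{X_{A_k}/A_k}(\log D_{A_k})(-D_{A_k}))$. This follows from degeneration of the log Hodge-to-de Rham spectral sequence for the pair $(X,D)$, together with the identification $\Omega^n(\log D)(-D)\cong\cO$ used to write $T(-\log D)$ in terms of log forms.

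Once $D$ deforms to a divisor $\cD$ on $\cX$, we have $\cO(\cD)\cong\cL^{\otimes m}$ by uniqueness of extensions. Since $H$ is general, we can repeat the construction with enough general members of $|A|$ that have no common intersection on $X$. This produces sections of $\cL^{\otimes m}$ with empty common zero locus along $X$, hence near $X$ after shrinking $T$, so $\cL$ is $\pi$-semiample. An alternative route is to show that $H^0(X,L^{\otimes m})$ lifts, using that $h^0$ is constant by the global invariant cycle theorem. For the algebraizable case, we apply the same argument to the formal completion and then algebraize via Grothendieck existence.

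The main obstacle is the $T^1$-lifting step: verifying the needed base change and degeneration for the twisted log forms $\Omega^{n-1}(\log D)(-D)$. The pair $(X,D)$ is not Calabi--Yau in the log sense, so this requires the Hodge-theoretic input about the fibration structure of $D$, in particular the invariant cycle theorem. We expect most of the work to go there.
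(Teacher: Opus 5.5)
There is a genuine gap at the step that carries the whole proposition: the vanishing of $H^1(X,T_X)\to H^1(D,N_{D/X})$. You identify this map with ``the obstruction to keeping $[D]$ of type $(1,1)$'' and then deduce the vanishing from $H^2(X,\mathcal{O}_X)=0$. That identification is wrong.

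Since $H^1(X,\mathcal{O}_X)=H^2(X,\mathcal{O}_X)=0$, the sequence $0\to\mathcal{O}_X\to\mathcal{O}_X(D)\to\mathcal{O}_D(D)\to 0$ gives $H^1(X,\mathcal{O}_X(D))\cong H^1(D,\mathcal{O}_D(D))$. The image of $\xi\in H^1(X,T_X)$ in this group vanishes exactly when the \emph{section} $s_D$ lifts to the unique first-order extension of $L$ along $\xi$. The obstruction to keeping $c_1(L)$ of type $(1,1)$ is only the further composite
\[
H^1(X,T_X)\longrightarrow H^1(D,\mathcal{O}_D(D))\xrightarrow{\ \delta\ }H^2(X,\mathcal{O}_X),
\]
which vanishes for trivial reasons and says nothing about $s_D$. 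The group $H^1(X,\mathcal{O}_X(D))$ is typically nonzero here: for an elliptic fibration, Koll\'ar's theorems give $H^1(X,f^*A)\cong H^0(Y,\omega_Y\otimes A)$. Moreover, your argument at this point never uses that $D$ moves in a base-point-free system pulled back from $Y$. Without that input the conclusion is false. For a quasi-ruled surface $E$ over a curve of positive genus in a Calabi--Yau threefold with $H^1(X,\mathcal{O}_X)=0$, the bundle $\mathcal{O}_X(E)$ extends over $\Def(X)$, but by Wilson $E$ does not survive a general deformation.

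What must be proved is that the restriction $\gr_F^{n-1}H^n(X,\mathbb{C})\to\gr_F^{n-1}H^n(D,\mathbb{C})$ vanishes. This is your map under $T_X\cong\Omega^{n-1}_X$ and $N_{D/X}\cong\omega_D$, and it is where the invariant cycle theorem belongs. The paper argues as follows.
\begin{enumerate}
\item Pass to the universal family $g^\circ\colon\mathcal{D}^\circ\to S^\circ$ of smooth members of $|D|$. The restriction then factors through $\gr_F^{n-1}H^0(S^\circ,R^ng^\circ_*\mathbb{C})$.
\item Relative hard Lefschetz identifies this with $\gr_F^{n-2}H^0(S^\circ,R^{n-2}g^\circ_*\mathbb{C})$.
\item By the global invariant cycle theorem, the latter is a quotient of $\gr_F^{n-2}H^{n-2}(\mathcal{D},\mathbb{C})$.
\item This vanishes because $\mathcal{D}\to X$ is a projective bundle and $h^{n-2,0}(X)=h^{2,0}(X)=0$.
\end{enumerate}
So $H^2(X,\mathcal{O}_X)=0$ is used a second, essential time, not only to extend $L$.

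Conversely, the step you flag as the main obstacle is the routine one. Unobstructedness of $\Def(X,D)$ (Ran) needs only $K_X\sim 0$, smoothness of $D$, and Deligne's degeneration argument for $\Omega^\bullet_{X/A}(\log D)(-D)$, which computes $H^\bullet_c(X\setminus D)$; no fibration structure enters there. Your closing step is fine once the tangent-space surjectivity is actually established: deform several general members of $|A|$, and use $H^1(X,\mathcal{O}_X)=0$ to get $\mathcal{O}(\mathcal{D})\cong\mathcal{L}^{\otimes m}$.
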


\medskip
\noindent The assumption $K_X \sim 0$
implies that 
$\Omega_X^{n-1} \cong T_X$ and thus
\begin{align*}
h^0(X,T_X)=h^0(X,\Omega_X^{n-1})=h^{n-1}(X,\mathcal{O}_X)=h^{1}(X,\mathcal{O}_X)=0,    
\end{align*}
where the second-to-last equality follows from Serre duality.
In particular, $X$ admits a miniversal deformation space $\Def(X)$ (see, e.g., \cites{Dou74,Grau74}),
and as $H^2(X,\cO_X)=0$, the line bundle $L \coloneqq \mathcal{L}_0$ extends to $\Def(X)$ (see, e.g., \cite{HartshorneDefTheory}*{Theorem 6.4}). Thus, without loss of generality, we may prove \autoref{main_prop} under the assumption that $\cX \to T$ is the miniversal deformation $\cX \to \mathrm{Def}(X)$.

By assumption, $L$ is semiample and defines a fibration $X \to Y$.
Up to replacing $L$ with a tensor power, $L$ is the pullback of a very ample line bundle on $Y$.
In what follows, we will write $D$ for a general element of $|L|$, which is smooth by Bertini's theorem.  Notice that, if $\dim Y =1$, we have $Y = \mathbb P ^1$ by the assumption $H^1(X,\cO_X)=0$.
Thus, by choosing $D$ to be the pullback of a general section of $\mathcal{O}_{\mathbb P ^1}(1)$, we can guarantee that $D$ is connected regardless of $\dim Y$.

In order to prove \autoref{main_prop}, we consider the forgetful map $\Def(X, D) \longrightarrow \Def(X)$ and show that:
\begin{enumerate}
\item the deformation space $\Def(X, D)$ is smooth and the forgetful map is smooth onto its image; 
and
\item the forgetful map is dominant.
\end{enumerate}

To accomplish the first point, we use the following lemma.

\begin{lemma}\label{lemma:1}
   Under the assumptions of \autoref{main_prop}, the deformation space $\Def(X,D)$ of the pair $(X,D)$ is smooth, and the forgetful map $$\Def(X,D)\ \longrightarrow \  \Def(X)$$ is smooth onto its image.
\end{lemma}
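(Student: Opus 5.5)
Our plan is to apply the $T^1$-lifting criterion of Kawamata--Ran \cites{ran,Kaw92} to the deformation functor of the pair $(X,D)$, with a Hodge-theoretic description of its tangent spaces. Deformations of $(X,D)$ are governed by the logarithmic tangent sheaf $T_X(-\log D)$, so the first-order deformations over $A_k=\mathbb C[t]/(t^{k+1})$ are computed by $H^1$ of the relative log tangent sheaf. Since $K_X\sim 0$, contraction with a nowhere-vanishing $n$-form $\omega$ identifies $T_X\cong\Omega^{n-1}_X$ and $T_X(-\log D)\cong \Omega^{n-1}_X(\log D)(-D)$. Thus $T^1(X,D)=H^1(X,\Omega_X^{n-1}(\log D)(-D))$. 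The same holds relatively over $A_k$ for any deformation $(X_k,D_k)$, since $K_{X_k/A_k}$ stays trivial when $H^1(X,\cO_X)=0$ (indeed the relative canonical bundle deforms uniquely and trivially).

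The key step is that $H^1(X_k,\Omega^{n-1}_{X_k/A_k}(\log D_k)(-D_k))$ is a free $A_k$-module commuting with base change; this is the $T^1$-lifting property. We plan to obtain it from the degeneration of the Hodge-to-de Rham spectral sequence for the log de Rham complex of the pair twisted by $(-D)$. That complex computes the compactly supported cohomology $H^\bullet_c(X\setminus D)$, equivalently the relative cohomology $H^\bullet(X,D)$, and its Hodge filtration degenerates at $E_1$ by Deligne's mixed Hodge theory, relatively over Artinian bases as in Steenbrink/Deligne--Illusie style arguments. Since the pair $(X_k,D_k)\to A_k$ is topologically trivial, $\mathbb H^\bullet$ of the relative complex is free over $A_k$. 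Degeneration then gives that each graded piece $H^q(\Omega^p_{X_k/A_k}(\log D_k)(-D_k))$ is free and commutes with base change. Kawamata--Ran then yields that $\Def(X,D)$ is unobstructed, hence smooth.

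For smoothness of the forgetful map onto its image, we would apply a relative version of the criterion. It suffices to show that the image of $T^1(X_k,D_k)\to T^1(X_k)$ is free and commutes with base change, or equivalently that the kernel is. The residue sequence $0\to \Omega^{n-1}_X(-D)\to\Omega^{n-1}_X(\log D)(-D)\to \Omega^{n-2}_D(-D|_D)\to 0$ is awkward. So we would rather compare $T^1(X,D)=H^1(\Omega^{n-1}_X(\log D)(-D))$ with $T^1(X)=H^1(\Omega^{n-1}_X)$ via the natural map of Hodge pieces induced by $H^n(X,D)\to H^n(X)$. This map is a morphism of mixed Hodge structures, hence strict with respect to the Hodge filtration, so its image on $\mathrm{Gr}_F^{n-1}$ equals $\mathrm{Gr}_F^{n-1}$ of the image in $H^n(X)$. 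The same holds over $A_k$ by freeness of the local systems involved, with both source and target free. The image is the graded piece of a sub-local-system, in fact the image of $H^n(X,D)\to H^n(X)$, whose cokernel injects into $H^n(D)$. By strictness and $E_1$-degeneration, this gives a free image commuting with base change, and hence smoothness of $\Def(X,D)\to\Def(X)$ onto its (smooth) image.

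We expect the main obstacle to be the relative (over $A_k$) $E_1$-degeneration and the strictness statement for the twisted log complex. The standard reference handles $\Omega^\bullet(\log D)$ for $H^\bullet(X\setminus D)$. Here we need the $(-D)$ version; we would deduce it by duality from the untwisted log complex (Serre duality pairs $\Omega^p(\log D)(-D)$ with $\Omega^{n-p}(\log D)$), and transport freeness over $A_k$ by a dimension-count argument: lengths of $\mathbb H$ equal the sum of lengths of the $E_1$ terms, and these are bounded above by $(k+1)$ times the central-fiber dimensions.
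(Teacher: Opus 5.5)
Your proposal follows the paper's proof essentially step by step. You use $K_X\sim 0$ to identify $T^1(X_k,D_k)\cong H^1(\Omega^{n-1}_{X_k/A_k}(\log D_k)(-D_k))=\gr^{n-1}_F H^n_c(X\setminus D,A_k)$; you use Deligne's length-counting argument to get freeness and base change over Artinian rings, hence $T^1$-lifting; and you use strictness of the long exact sequence of the pair $(X,D)$ on $\gr^{n-1}_F$ to get a free image for $T^1(X_k,D_k)\to T^1(X_k)$, i.e.\ constant rank of the forgetful map. The one justification to fix is strictness over $A_k$: it does not follow from freeness of the local systems alone. It comes from $E_1$-degeneration for $X_k$, $D_k$ and the pair, together with the exact sequence $0\to\Omega^\bullet_{X_k/A_k}(\log D_k)(-D_k)\to\Omega^\bullet_{X_k/A_k}\to\Omega^\bullet_{D_k/A_k}\to 0$. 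This makes the bounded sequence of $\gr_F$-pieces an exact sequence of free modules, hence split, so kernels and images are free and commute with base change.
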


\begin{proof}
 This is originally proved in \cite{ran}*{Theorem 2.1}; we include a proof for the reader's convenience. For a deformation $(X',D')$ of $(X,D)$ over an Artinian ring $A'$, the deformation module is the hypercohomology group $\bH^1(X',T_{X'/A'}\to \cO_{D'}(D'))$, which,
  by the condition $K_X \sim 0$,
  is identified with 
  \begin{equation}\label{eq:identification}
  \bH^1(X',\Omega_{X'/A'}^{n-1}\to \omega_{D'/A'})\ \cong\  H^1(X',\Omega_{X'/A'}^{n-1}(\log D')(-D')).
  \end{equation}
  This can be identified with $\gr^{n-1}_FH^n_c(X'\setminus D',A')$, where $F^\bullet$ is the filtration coming from the Hodge--de Rham spectral sequence 
  \begin{equation}
  \begin{split}
      H^{q}(X',\Omega^p_{X'/A'}(\log D')(-D')) & \ \Rightarrow\  H^{p+q}(X',\Omega^\bullet_{X'/A'}(\log D')(-D'))\\
      & \ \simeq \  H^{p+q}_c(X'\setminus D',A'),
  \end{split}
  \end{equation}
  where the latter isomorphism follows from \cite{EV86}*{Proposition (A.2)}.
  Indeed, for $j: X' \setminus D' \to X'$ the inclusion, one has
  $$
  \Omega^\bullet_{X'/A'}(\log D')(-D')\ \simeq \ \mathbb{D}(\Omega^{\bullet}_{X'/A'}(\log D'))\ \simeq \ \mathbb{D}(Rj_{*}\bC_{X' \setminus D'}) \ \simeq j_{!}\bC_{X'\setminus D'}.
  $$
  By Deligne's argument in \cite{delignelefschetz}*{\S~3}, since the Hodge--de Rham spectral sequence degenerates on the special fiber, the complex $\Omega^\bullet_{X'/A'}(\log D')(-D')$ is flat, i.e., every cohomology sheaf of the complex is flat. Thus we have
  $$
  H^{p+q}_c(X',A')\ \cong\  H^{p+q}_c(X',\bC)\otimes A',
  $$
  the Hodge--de Rham spectral sequence degenerates over $A'$ as well, and the deformation module $\gr^{n-1}_FH^n_c(X'\setminus D',A')$ of $(X,D)$ is free and compatible with base change. By $T^1$-lifting, $\Def(X,D)$ is smooth.

Now we show the smoothness of $\Def(X,D)\to \Def(X)$. Notice that the deformation module of $X'$ over $A'$ is $\gr^{n-1}_F H^n(X',A')$. From the natural exact sequence

\begin{equation}
\begin{tikzpicture}[descr/.style={fill=white,inner sep=1.5pt}]
  \matrix (m) [
    matrix of math nodes,
    row sep=1em,
    column sep=2.5em,
    text height=1.5ex, text depth=0.25ex
  ]
  { H^{n-1}(X',A')& H^{n-1}(D',A') & H^n_c(X'\setminus D',A') \\
    H^n(X',A') & H^n(D',A') 
      & {} \\
  };
\path[overlay,->, font=\scriptsize,>=latex]
    
    (m-1-1) edge (m-1-2)
    (m-1-2) edge (m-1-3)
    (m-1-3) edge[out=355,in=175]  (m-2-1)
    (m-2-1) edge (m-2-2);
\end{tikzpicture}
\label{LEStop}
\end{equation} where the first and last maps are the restriction maps, we obtain an exact sequence
\begin{equation}\label{LESgr}
\begin{tikzpicture}[descr/.style={fill=white,inner sep=1.5pt}]
  \matrix (m) [
    matrix of math nodes,
    row sep=1em,
    column sep=2.5em,
    text height=1.5ex, text depth=0.25ex
  ]
  { \gr^{n-1}_FH^{n-1}(X',A') & \gr^{n-1}_FH^{n-1}(D',A')& \gr^{n-1}_FH^n_c(X'\setminus D',A') \\
       \gr^{n-1}_F H^n(X',A')& \gr^{n-1}_FH^n(D',A') .
      & {} \\
  };
\path[overlay,->, font=\scriptsize,>=latex]
    
    (m-1-1) edge (m-1-2)
    (m-1-2) edge (m-1-3)
    (m-1-3) edge[out=355,in=175] (m-2-1)
    (m-2-1) edge (m-2-2);
\end{tikzpicture}
\end{equation}
since the degeneration of the Hodge--de Rham spectral sequence implies the morphisms of \eqref{LEStop} are strictly compatible with the filtrations.  All of the modules in \eqref{LESgr} are free and compatible with base change, and it follows that the same is true for the kernel and image of each morphism.  Thus, the tangent map of $\Def(X,D)\to \Def(X)$ has constant rank, which implies smoothness onto the image.
\end{proof}
\begin{remark}
 Poincar\'e duality identifies $$H^n_c(X'\setminus D',A') \ \simeq\  H^n(X'\setminus D',A')^\vee,$$ which is compatible with the Hodge and weight filtrations. Thus, we could have just as easily used the above argument with the usual log-de Rham complex.
\end{remark}

Now we have smoothness for \autoref{main_prop}, and we want to show that the forgetful map is dominant.  Given \autoref{lemma:1}, to prove dominance, it suffices to show that the map on tangent spaces is surjective.
Given \eqref{eq:identification}, the map on tangent spaces is surjective if and only if the restriction map $$H^{n-1,1}(X) \ \longrightarrow \ H^{n-1,1}(D)$$ is the zero map. This is the content of the following Lemma.

\begin{lemma}\label{lemma:2}
Under the assumptions of \autoref{main_prop}, the restriction
$$\gr_F^{n-1}H^n(X,\bC)\ \longrightarrow \ \gr_F^{n-1}H^n(D,\bC)$$
is the zero map.    
\end{lemma}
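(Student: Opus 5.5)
The plan is to turn the statement into a statement about coherent cohomology and then kill the image using the fibration structure $f\colon X\to Y$, the choice $D=f^{-1}(H)$ with $H$ a general very ample hypersurface section of $Y$, and the vanishings $H^1(X,\cO_X)=H^2(X,\cO_X)=0$.

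\textbf{Step 1 (reduction to sheaves).} By Hodge theory on $X$ and $D$, the restriction $\gr_F^{n-1}H^n(X,\bC)\to\gr_F^{n-1}H^n(D,\bC)$ is induced by $H^1(X,\Omega^{n-1}_X)\to H^1(D,\Omega^{n-1}_D)$. Since $K_X\sim 0$, contraction with a nowhere vanishing $n$-form gives $\Omega^{n-1}_X\cong T_X$. Adjunction gives $\Omega^{n-1}_D=K_D\cong\cO_D(D)=N_{D/X}$. Under these identifications, the sheaf map $\Omega^{n-1}_X|_D\to\Omega^{n-1}_D$ becomes the normal projection $T_X|_D\to N_{D/X}$, so the map to study is
\[
H^1(X,T_X)\longrightarrow H^1(D,N_{D/X}),
\]
which is the obstruction map for deforming $D$ along first order deformations of $X$. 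Since $D=f^*H$, we have $N_{D/X}=f^*N_{H/Y}$, and the map factors through $df\colon T_X\to f^*T_Y$:
\[
H^1(X,T_X)\to H^1(X,f^*T_Y)\to H^1(D,f^*T_Y|_D)\to H^1(D,f^*N_{H/Y}).
\]

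\textbf{Step 2 (Leray along $f$).} Using $f_*\cO_X=\cO_Y$, the Leray spectral sequence for $f|_D\colon D\to H$ gives
\[
0\to H^1(H,N_{H/Y})\to H^1(D,f^*N_{H/Y})\to H^0(H,R^1f_*\cO_X\otimes N_{H/Y}).
\]
By Koll\'ar's torsion-freeness and decomposition theorems applied to $R^if_*\omega_X=R^if_*\cO_X$, this sequence splits. I then treat the two components separately.

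For the first component, I will show it is controlled by $H^1(Y,T_Y)\to H^1(H,N_{H/Y})$. After replacing $L$ by a high power, Serre vanishing kills $H^1(H,N_{H/Y})=H^1(H,\cO_H(H))$ (using $H^1(Y,\cO_Y)\subset H^1(X,\cO_X)=0$ and $H^2(Y,\cO_Y)\subset H^2(X,\cO_X)=0$ where needed).

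For the second component, the image of a class in $H^1(X,T_X)$ is a section of $R^1f_*\cO_X$ twisted by the normal bundle. Here I will use the Hodge-theoretic input instead: by the global invariant cycle theorem, the image of $H^n(X)\to H^n(D)$ lies in the monodromy invariants of the part of $H^n(D)$ coming from $R^1f_*$ of the fibers. Its $\gr_F^{n-1}$ piece is then governed by $H^0(Y,R^1f_*\cO_X)$ up to conjugation and duality. This group injects into $H^1(X,\cO_X)=0$ by the degeneration of the Leray spectral sequence for $\cO_X=\omega_X$ (Koll\'ar). The term $H^2(Y,\cO_Y)$ that could obstruct this is controlled by $H^2(X,\cO_X)=0$.

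\textbf{Main obstacle.} The hardest step is matching the purely Hodge-theoretic statement (global invariant cycles, a sub-Hodge structure of $H^n(D)$) with the coherent description in Step 2. Concretely, one must show that the $\gr_F^{n-1}$-part of the invariant classes of $D$ is the conjugate of a piece of $H^{1}$ or $H^2$ of $\cO$ on $X$, which vanishes by hypothesis. I would first attempt this with the decomposition theorem for $Rf_*\bQ_X$. The restriction to $D$ then becomes restriction of intersection-cohomology summands on $Y$ to $H$. For the summands $H^a(Y,R^b f_*)$ with $b\ge 2$, the type $(n-1,1)$ forces the contribution to come from $H^{\le 1}(\cO)$ or $H^{2}(\cO)$ via Hodge symmetry, and these vanish. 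The summands with $b\le 1$ are handled by the Lefschetz hyperplane theorem on $Y$ together with $H^{1,0}=H^{2,0}=0$.
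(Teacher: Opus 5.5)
\textbf{There is a genuine gap, and it is in the second Leray component.} Your Step~1 and your treatment of the first component are fine. The vanishing $H^1(H,\mathcal O_H(H))=0$ even holds without raising the power of $L$, by Koll\'ar vanishing $H^1(Y,\mathcal O_Y(H))=0$ together with $H^2(Y,\mathcal O_Y)\subset H^2(X,\mathcal O_X)=0$. But all of the content of the lemma sits in $H^0(H,R^1f_*\mathcal O_X\otimes\mathcal O_H(H))$, and there your argument targets the wrong groups. You use $H^1(X,\mathcal O_X)=0$, through $H^0(Y,R^1f_*\mathcal O_X)$, and you use $H^2(X,\mathcal O_X)=0$ only through its summand $H^2(Y,\mathcal O_Y)$.

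Test this on a hyperk\"ahler fourfold with a Lagrangian fibration $f\colon X\to\mathbb P^2$, with $L=f^*\mathcal O(1)$ and $D=f^{-1}(\ell)$. Here $K_X\sim 0$, $H^1(X,\mathcal O_X)=0$, $H^0(\mathbb P^2,R^1f_*\mathcal O_X)=H^0(\mathbb P^2,\Omega^1_{\mathbb P^2})=0$ and $H^2(\mathbb P^2,\mathcal O)=0$. The first component $H^1(\ell,\mathcal O_\ell(1))$ also vanishes. Yet $H^{3,1}(X)\to H^{3,1}(D)$ is nonzero: $[\sigma\wedge\omega]|_D\neq 0$, since $\sigma|_D\neq0$ and so $\int_D\sigma\wedge\bar\sigma\wedge\omega\neq0$. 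Equivalently, a first-order deformation of $X$ along which $L$ does not extend cannot carry $D$.

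So no argument resting only on the vanishings you cite can succeed. Here the obstruction lives in $H^1(Y,R^1f_*\mathcal O_X)=H^1(\mathbb P^2,\Omega^1)$, a Koll\'ar summand of $H^2(X,\mathcal O_X)$ that your sketch never touches. Your decomposition-theorem fallback has the same problem:
\begin{itemize}
\item You invoke the global invariant cycle theorem without a family of divisors whose total space is smooth projective and maps to $X$.
\item The claim about the summands with $b\ge 2$ is unproved.
\item The summands with $b\le 1$ sit in degrees $a\ge n-1>\dim H$, where weak Lefschetz gives no information. In degrees $a\le\dim H$ it gives injectivity, which is the opposite of what you need.
\end{itemize}

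\textbf{What is missing is a reduction from $(n-1,1)$-classes to holomorphic $(n-2)$-forms.} This is how the paper argues, and it makes your Steps~1--2 unnecessary.
\begin{itemize}
\item Let $g\colon\mathcal D\to S=|D|$ be the universal divisor. Since $|D|$ is base point free, $\mathcal D\to X$ is a projective bundle, so $\mathcal D$ is smooth projective.
\item The restriction $H^n(X)\to H^n(D)$ factors through $H^n(\mathcal D)$. By the global invariant cycle theorem its image lies in the monodromy invariants $H^0(S^\circ,R^ng^\circ_*\mathbb C)$.
\item Relative hard Lefschetz on the $(n-1)$-dimensional fibres gives $R^ng^\circ_*\mathbb C\cong R^{n-2}g^\circ_*\mathbb C(-1)$, compatibly with monodromy. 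Hence $\gr_F^{n-1}$ of the invariants in $H^n(D)$ equals $\gr_F^{n-2}$ of the invariants in $H^{n-2}(D)$.
\item By global invariant cycles and strictness, the latter is a quotient of $H^{n-2,0}(\mathcal D)\cong H^{n-2,0}(X)$.
\item Finally, $h^{n-2,0}(X)=h^{n-2}(X,\mathcal O_X)=h^2(X,\mathcal O_X)=0$, by Hodge symmetry and Serre duality with $K_X\sim0$.
\end{itemize}
It is $H^2(X,\mathcal O_X)=0$, not $H^1(X,\mathcal O_X)=0$, that does the work.
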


\medskip
\begin{proof}
    We would like to argue by the global invariant cycles theorem.
    Since $D$ is not a fiber of $f$ whenever $\dim Y > 1$, we first perform a reduction step.
    
    Let $S$ be the complete linear series $|D|$ and $g \colon \cD\to S$ the universal family of divisors.
    Since $|D|$ is base-point-free, the natural evaluation map $\ev:\cD\to X$ is then a projective bundle
    over $X$.
    Let $S^\circ\subset S$ be the open subset parametrizing smooth divisors, and $g^\circ \colon \cD^\circ\to S^\circ$ the restriction family.
    By construction, $D$ appears as a fiber of $g^\circ$.
    
    By the evaluation map, $D \subset \mathcal{D}$ is sent isomorphically to $D \subset X$. Thus, by the global invariant cycle theorem, the morphism $\gr_F^{n-1}H^n(X,\bC)\to \gr_F^{n-1}H^n(D,\bC)$ factors as \[\begin{tikzcd}[ampersand replacement=\&]
	{\gr_F^{n-1}H^n(X,\bC)} \&\& {\gr_F^{n-1}H^n(D,\bC)} \\
	{\gr_F^{n-1}H^n(\mathcal{D},\bC)} \&\& {\gr_F^{n-1}H^0(S^\circ,R^ng^\circ_\ast\bC_{\cD^\circ})}.
	\arrow[from=1-1, to=1-3]
	\arrow[from=1-1, to=2-1]
	\arrow[from=2-1, to=2-3]
	\arrow[from=2-3, to=1-3]
\end{tikzcd}\]
    Thus, in order to conclude the result, it suffices to show $\gr_F^{n-1}H^0(S^\circ,R^ng^\circ_\ast\bC_{\cD^\circ}) = 0$. By the relative hard Lefschetz theorem, we have
    $$
    R^ng^\circ_\ast\bC_{\cD^\circ}\ \simeq\  R^{n-2}g^\circ_\ast\bC_{\cD^\circ}(-1).
    $$
    Then, it follows that $$\gr_F^{n-1}H^0(S^\circ,R^ng^\circ_\ast\bC_{\cD^\circ})\ \simeq \ \gr_F^{n-2}H^0(S^\circ,R^{n-2}g^\circ_\ast\bC_{\cD^\circ}).$$ We need to show that the latter is $0$. By the global invariant cycles theorem, one has the surjection $$\gr_F^{n-2}H^{n-2}(\mathcal{D},\bC)\ \twoheadrightarrow\ \gr_F^{n-2}H^0(S^\circ,R^{n-2}g^\circ_\ast\bC_{\cD^\circ}).$$
    To conclude, it is enough to show $H^0(\mathcal{D},\Omega_{\mathcal{D}}^{n-2})=0$. By Grauert's theorem and the fact that $\ev:\cD\rightarrow X$ is a projective bundle, we have $$R^i\pi_\ast\mathcal{O}_{\mathcal{D}}\ =\ 0\ \ \ \ \textup{for any } \ i > 0.$$
    Therefore, the Leray spectral sequence degenerates and $H^{n-2}(\mathcal{D},\mathcal{O}_{\mathcal{D}})\simeq H^{n-2}(X,\mathcal{O}_{X})$, and by duality and the assumption $K_X\sim \mathcal{O}_X$ we obtain
    $$h^{n-2,0}(\cD)\ = \
    h^{0,n-2}(\mathcal{D}) \ =\  
    h^{0,n-2}(X) \ =\  
    h^{n-2,0}(X)\ =\  
    h^{2,0}(X)\ =\ 0,
    $$ where we use the assumption $H^2(X,\cO_X)=0$.
\end{proof}

\smallskip

With \autoref{lemma:1} and \autoref{lemma:2}, we complete the proof of \autoref{main_prop}.

\begin{proof}[Proof of \autoref{main_prop}]
Recall from above that $f: X \to Y$ is the morphism induced by $L$, where $L$ is the pullback of a very ample line bundle on $Y$ and $D \in |L|$ is a general element, which is smooth.  Furthermore, $L$ deforms to a line bundle $\cL$ on the miniversal deformation $\cX \to \mathrm{Def}(X)$.

By \autoref{lemma:1} and \autoref{lemma:2}, we know that $\Def(X, D) \rightarrow \Def(X)$ is a smooth and dominant map. This implies that a general section $D$ of $L$ deforms to a divisor $D_t$ on $\cX_t$, for $t \in \Def(X)$ sufficiently close to $0 \in \Def(X)$.
Thus, $D_t$ is algebraically equivalent to a rational section of $\cL_t$.
By the assumption $H^1(X,\cO_X)=0$, we know that $\mathrm{Pic}(\cX_t)$ is a discrete group, and hence $D_t$ is a section of $\cL_t$.
Thus, a general section of $L$ lifts to $\cL$ on $\cX$, hence so does every section.  As the relative base locus of $|\cL|$ on the proper family $\cX \to \Def(X)$ does not include $\cX_0 = X$, it does not dominate $\Def(X)$ and hence $\cL$ is relatively semiample in a neighborhood of $0 \in \Def (X)$ which concludes the proof of \autoref{main_prop}.
\end{proof}

\subsection{Proof of \autoref{thm:main_thm}}
Before the proof, we show that it suffices to prove the required statement in \autoref{thm:main_thm} up to a power and up to numerical equivalence.
\begin{lemma}\label{lem:roots}
    Let $\pi:\cX\to (0\in T)$ be a smooth proper family over an analytic germ.  Let $\cL$ and $\cM$ be two line bundles on $\cX$ such that $\cM|_{\cX_0}\cong \cL^{\otimes m}|_{\cX_0}$ and $\cM\equiv_T \cL^{\otimes m}$ for some $m>0$. Then there exists a line bundle $\cL'$ on $\cX$ with $\cL'|_{\cX_0}\cong \cL|_{\cX_0}$ and $\cM^{\otimes n}\cong (\cL')^{\otimes mn}$ for some $n>0$. If, moreover, $\pi,\cL,$ and $\cM$ are algebraizable, then so is $\cL'$.
\end{lemma}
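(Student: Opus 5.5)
Set $\cN \coloneqq \cM\otimes \cL^{-\otimes m}$. Then $\cN|_{\cX_0}$ is trivial and $\cN\equiv_T 0$. The plan is to show that some power $\cN^{\otimes n}$ is the $m$-th power of a line bundle $\cP$ on $\cX$ with $\cP|_{\cX_0}$ trivial. Then $\cL' \coloneqq \cL\otimes \cP$ satisfies $\cL'|_{\cX_0}\cong\cL|_{\cX_0}$ and
\[
(\cL')^{\otimes mn}\cong \cL^{\otimes mn}\otimes \cN^{\otimes n}\cong \cM^{\otimes n}.
\]

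The first step is to analyze $\cN$ through the exponential sequence on $\cX$. Shrink $T$ to a contractible Stein representative, so that $\cX$ retracts onto $\cX_0$ and $H^i(\cX,\bZ)\cong H^i(\cX_0,\bZ)$.

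The class $c_1(\cN)\in H^2(\cX,\bZ)\cong H^2(\cX_0,\bZ)$ equals $c_1(\cN|_{\cX_0})=0$. Hence $\cN$ comes from $H^1(\cX,\cO_\cX)$, i.e.\ $\cN\cong\exp(\xi)$ for some $\xi\in H^1(\cX,\cO_\cX)$. Over a Stein base, $H^1(\cX,\cO_\cX)=H^0(T,R^1\pi_*\cO_\cX)$, and by Hodge theory $R^1\pi_*\cO_\cX$ is locally free; this uses that the fibers are K\"ahler, so $h^{0,1}$ is constant.

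Now take $\cP\coloneqq\exp(\xi/m)$. Then $\cP^{\otimes m}\cong\cN$ and $\cP|_{\cX_0}=\exp(\xi_0/m)$. The only issue is that $\cP|_{\cX_0}$ need not be trivial, because $\xi_0$ is determined only modulo the image of $H^1(\cX_0,\bZ)$. Since $\cN|_{\cX_0}$ is trivial, $\xi_0$ lies in the image of $H^1(\cX_0,\bZ)\to H^1(\cX_0,\cO)$. That image coincides with the image of $H^1(\cX,\bZ)\to H^1(\cX,\cO_\cX)$ restricted to $0$. So we may subtract a global integral class from $\xi$ (which does not change $\cN$) to arrange $\xi_0=0$. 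Then $\cP|_{\cX_0}$ is trivial and we can take $n=1$.

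The hypothesis $\cM\equiv_T\cL^{\otimes m}$ is then not even needed. It may still be useful in the algebraic case, where the transcendental $\exp(\xi/m)$ is not available.

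\textbf{Algebraizable case.} This is the expected main obstacle: $\cP$ is analytic and must be replaced by an algebraic bundle. Here $\cN$ is a torsion-free class in the relative Picard scheme, lying in the identity component $\mathrm{Pic}^0_{\cX/T}$, which is an abelian scheme over the (henselized or algebraic) base since the fibers are smooth and projective. Multiplication by $m$ is finite \'etale on an abelian scheme. The section $s_\cN$ of $\mathrm{Pic}^0_{\cX/T}$ vanishes at $0$, so it lifts through $[m]$ to a section $s$ with $s(0)=0$, after an \'etale neighborhood of $0$, which is harmless for a germ.

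To turn $s$ into an actual line bundle, one must handle the Brauer obstruction to the existence of a Poincar\'e bundle. Taking a further power $n$ kills this obstruction, and this is the role of $n$ in the statement. Alternatively, one can use the rigidification along a section of $\pi$ through a point of $\cX_0$, which exists after an \'etale base change. I expect this bookkeeping, rather than the topological argument above, to be where the care is needed.
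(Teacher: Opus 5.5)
Your proof is correct and is essentially the paper's argument. The paper likewise views the homologically trivial bundle $\cM\otimes(\cL^\vee)^{\otimes m}$ as a section of $\operatorname{Pic}^0(\cX/T)=R^1\pi_*\cO_\cX/R^1\pi_*\mathbb{Z}_\cX$ vanishing at $0$, and lifts it through the finite \'etale map $[m]$ along the branch through the origin; your choice of $\xi$ with $\xi_0=0$ followed by $\exp(\xi/m)$ is exactly this lift carried out on the universal cover $R^1\pi_*\cO_\cX$. The paper dispatches the algebraic case as ``the same argument in the algebraic category,'' which is your abelian-scheme step, and your rigidification remark supplies a detail the paper omits. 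Your observation that the isomorphism on $\cX_0$ already forces homological equivalence, so that $n=1$ works, is right and consistent with Remark~\ref{rmk:numericalequiv-is-necessary}. The only inaccuracy is your guess about the role of $n$: in the paper it passes from numerical to homological equivalence, not to kill a Brauer obstruction, and that obstruction disappears anyway after \'etale localization at $0$ or via rigidification.
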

\begin{proof}
By replacing $\cM$ with $\cM^{\otimes n}$ (and $m$ with $mn$) for some $n>0$, we may assume that $\cM$ is homologically equivalent to $\cL^{\otimes m}$, i.e., their first Chern classes coincide in 
\[
H^2(\cX,\mathbb{Z}) \ \cong \ H^2(\cX_0,\mathbb{Z}).
\]
In particular, the line bundle $(\cL^\vee)^{\otimes m}\otimes\cM$ is homologically trivial, and thus defines a section of the analytic group over $T$
\[
\Pic^0(\cX/T)\ \coloneqq \ R^1\pi_*\cO_\cX / R^1\pi_*\mathbb{Z}_{\cX},
\]
which is a family of complex tori over $T$.  Moreover, this section specializes to 0. The multiplication-by-$m$ morphism 
\[
[m]: \Pic^0(\cX/T)\ \to\  \Pic^0(\cX/T)
\]
is a finite étale cover. Hence, by shrinking $T$, this cover admits a local section through $[\mathcal{O}_X]\in \Pic^0(\cX_0)$ and lifting the given section $\cL^{-m}\otimes\cM$. In other words, there exists a section $\mathcal{F}$ of $\Pic^0(\cX/T)$ such that
\[\mathcal{F}|_{\cX_0}\cong \mathcal{O}_{\cX_0}\qquad \textup{and}\qquad
\mathcal{F}^{\otimes m} \ \cong\  (\cL^\vee)^{\otimes m}\otimes\cM.
\]
Setting $\cL'=\cL\otimes \mathcal{F}$, we obtain $\cL'|_{\cX_0}\cong \cL|_{\cX_0}$ and $\cM \cong (\cL')^{\otimes m}$. The final claim is clear by the same argument in the algebraic category.
\end{proof}

We will also need to separately treat the case of families of complex tori.
\begin{lemma}\label{lemma:bundle_ab_var}
Let $\pi \colon \mathcal{X} \to (0\in T)$ be a family of complex tori over an analytic germ, and let $\mathcal{L}$ be a line bundle on $\mathcal{X}$ whose central fiber $\mathcal{L}_0  \coloneqq \mathcal{L}|_{\mathcal{X}_0}$ is semiample.  
Then there exists a $\pi$-semiample line bundle $\mathcal{M}$ on $\mathcal{X}$ such that $\cM_0\cong \cL_0$. If, moreover, $\pi$ and $\cL$ are algebraizable, then $\cM$ may be taken algebraizable.
\end{lemma}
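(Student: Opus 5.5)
Since $\cL_0$ is semiample on the complex torus $\cX_0=V/\Lambda$, it defines a fibration $f_0\colon \cX_0\to Y_0$, where $Y_0$ is itself a projective complex torus $V/W \big/ \Lambda/(\Lambda\cap W)$ for a complex subspace $W\subset V$ with $\Lambda\cap W$ a lattice in $W$. Moreover, up to translation, $\cL_0^{\otimes m}\cong f_0^*A_0$ for some ample $A_0$ on $Y_0$ and some $m>0$. (A semiample line bundle on a torus is nef. Its Hermitian form $H=c_1(\cL_0)$ is positive semidefinite with kernel $W$, and the Stein factorization of the map it induces is the quotient by the connected subtorus $W/(W\cap\Lambda)$.) By \autoref{lem:roots}, it suffices to construct a $\pi$-semiample $\cM$ with $\cM_0\cong \cL_0^{\otimes m}$ and $\cM\equiv_T\cL^{\otimes m}$. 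Indeed, the lemma then produces $\cL'$ with $\cL'_0\cong\cL_0$ and $(\cL')^{\otimes mn}\cong\cM^{\otimes n}$, and this $\cL'$ is $\pi$-semiample.

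The plan is to deform the subtorus together with the class. Trivialize $H^1(\cX_t,\bZ)\cong\Lambda^\vee$ over the germ. The class $c=c_1(\cL_t)\in \wedge^2\Lambda^\vee$ is constant and of type $(1,1)$ on every fiber $\cX_t=V_t/\Lambda$. The sublattice $\Lambda_W=\Lambda\cap W$ equals the radical of the alternating form $c$ on $\Lambda$ (intersected appropriately), since $H$ is semidefinite and $\operatorname{Im}H=c$. This radical is a topological datum, so it is constant in $t$. The key claim is that on every fiber $\cX_t$ the real span $\Lambda_W\otimes\bR\subset V_t$ is a complex subspace. Granting this, the quotient $\cY=\cX/(\Lambda_W\otimes\bR/\Lambda_W)$ is a smooth family of complex tori over $T$ with a smooth proper fibration $\psi\colon\cX\to\cY$ restricting to $f_0$.

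For the claim, observe that $c$ is of type $(1,1)$ on $\cX_t$, so $H_t(v,w)=c(iv,w)+ic(v,w)$ is a Hermitian form on $V_t$. Its kernel is a complex subspace which is contained in the radical $\operatorname{rad}(c)\otimes\bR=\Lambda_W\otimes\bR$. Because $H_0$ is semidefinite with kernel of real dimension $2\dim W=\operatorname{rank}\Lambda_W$, and the signature of $H_t$ varies continuously and the rank of $c$ is constant, the kernel of $H_t$ has the same dimension for $t$ near $0$ and $H_t$ stays positive semidefinite. Hence $\ker H_t=\Lambda_W\otimes\bR$ is complex. This step, namely showing that the radical stays a complex subspace and that semidefiniteness persists, is the main obstacle, and the signature and rank argument above is what I would try first.

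Finally, $c$ descends to a class $c_Y\in\wedge^2(\Lambda/\Lambda_W)^\vee$ that is positive definite of type $(1,1)$ on the fibers of $\cY\to T$. Therefore, after shrinking $T$, the family $\cY$ is projective over $T$. Using the line-bundle-from-a-class construction (Appell--Humbert with a continuously varying character extending the character of $A_0$, for example the trivial choice after translating), I would produce a $\phi$-ample line bundle $\cA$ on $\cY$ with $\cA_0\cong A_0$. Setting $\cM=\psi^*\cA$ gives a $\pi$-semiample bundle with $\cM_0\cong\cL_0^{\otimes m}$ and $c_1(\cM)=m\,c=c_1(\cL^{\otimes m})$, and \autoref{lem:roots} finishes the argument. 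In the algebraic case, the quotient family $\cY$ and $\cA$ are algebraizable by relative GAGA, since $\cY\to T$ is projective and $\cA$ is relatively ample, so $\cM$ is algebraic and \autoref{lem:roots} applies algebraically.
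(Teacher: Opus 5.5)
Your proposal is correct and follows essentially the same route as the paper: take the radical of the constant class $c_1(\cL)$ on $H_1$, quotient by the resulting family of subtori, observe that the descended class is a relative polarization, extend the ample bundle from the central fiber, pull back, and conclude with \autoref{lem:roots}. Your ``main obstacle'' is in fact immediate, since $c(iv,iw)=c(v,w)$ for a $(1,1)$-class forces its radical to be $i$-stable on every fiber.

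The one step to tighten is the extension of $A_0$. Fiberwise Appell--Humbert data with a merely \emph{continuously} varying semicharacter need not glue to a holomorphic bundle on your quotient $\cY$. Translating would also break $\mathcal{A}_0\cong A_0$, and hence the hypothesis of \autoref{lem:roots}. So either keep the semicharacter constant and verify holomorphicity (e.g.\ via classical theta factors), or argue as the paper does: line bundles of class $c_Y$ form a $\Pic^0(\cY/T)$-torsor smooth over $T$, which therefore has a section through $[A_0]$.
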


\begin{proof}

For any $t \in T$, the Chern class $c_1(\mathcal{L}_t) \in H^2(\mathcal{X}_t,\mathbb{Z})$ defines a bilinear form
\[
H_1(\mathcal{X}_t,\mathbb{Z}) \times H_1(\mathcal{X}_t,\mathbb{Z}) \longrightarrow \mathbb{Z}
\]
via $\langle\alpha,\beta\rangle=c_1(\mathcal{L}_t)(\alpha,\beta)$ using $H^2(\mathcal{X}_t,\mathbb{Z})\cong \bigwedge^2H^1(\mathcal{X}_t,\mathbb{Z})$.  
Let
\[
\mathbb{V}_t \subseteq  H_1(\mathcal{X}_t,\mathbb{Z})
\]
be the subspace where the form is totally degenerate.
Since $c_1(\mathcal{L})$ varies locally constantly in families, each $\mathbb{V}_t$ is a
$\mathbb{Z}$-Hodge substructure and the rank of $\mathbb{V}_t$ is locally constant. The $\mathbb{V}_t$ assemble into a sub-variation of Hodge structure
\[
\mathbb{V} \subseteq (R^1 \pi_\ast \mathbb{Z})^\vee,
\]
which corresponds, via the correspondence between sub-Hodge structures of a Hodge structure of weight $-1$ and complex subtori, to a family of abelian subvarieties
\[
\mathcal{Y} \longrightarrow T
\]
inside the family $\mathcal{X} \to T$. Let
\[
q : \mathcal{X} \longrightarrow \mathcal{Z}
\]
be the quotient family $\mathcal{Z}  \coloneqq  \mathcal{X}/\mathcal{Y}$, which is again a family of complex tori over $T$. By construction, $c_1(\mathcal{L}_t)$ is trivial along the fibers of $\mathcal{Y}_t$ and therefore descends to a Hodge class on $H^2(\mathcal{Z}_t,\mathbb{Z})$, which is moreover a polarization because it is so on the special fiber.

Since $\mathcal{L}_0$ is semiample, some power $\mathcal{L}_0^{\otimes n}$ descends to an ample line bundle $\overline{\mathcal{F}}_0$ on $\mathcal{Z}_0$.  The numerical class of $\overline{\mathcal{F}}_0$ stays Hodge in the family $\cZ/T$, so the analytic space of line bundles in this numerical class is a $\Pic^0(\mathcal{Z}/T)=\mathcal{Z}^{\vee}/T$-torsor, and in particular smooth over $T$.  Thus, there is a line bundle $\cN$ on $\cZ$ in this numerical class with $\cN_0\cong \overline{\mathcal{F}}_0$.  Moreover, by the above $\cN$ is relatively ample.

Define $\mathcal{M}  \coloneqq  q^{*}\mathcal{N}$. Then by construction:
\begin{itemize}
    \item $\mathcal{M} \equiv_T \mathcal{L}^{\otimes n}$, since both yield the same induced polarization on the quotient;
    \item on the central fiber,
  \[
  \mathcal{M}_0 = q_0^{*}\mathcal{N}_0
  \cong q_0^{*}\overline{\mathcal{F}}_0
  = \mathcal{L}_0^{\otimes n};
  \]
  \item $\mathcal{M}$ is relatively semiample because it is the pullback of the relatively ample line bundle $\mathcal{N}$ along an abelian fibration.
\end{itemize}
Finally, applying Lemma~\ref{lem:roots}, the lemma is proven.
\end{proof}

Now let us complete the proof of \autoref{thm:main_thm}.

\begin{proof}[Proof of \autoref{thm:main_thm}]
    For brevity, set \(X \coloneqq \mathcal{X}_0\).
    Let \(X' \to X\) be an \'{e}tale cover that splits as a product
    \[
        X' \;=\; \prod X_i'
    \]
    of strict Calabi–-Yau varieties,  irreducible holomorphic symplectic varieties, and complex tori (cf.~\cite{Bea83}*{Theorem 1}).
    After passing to a further étale cover, we may assume that \(X' \to X\) is a Galois cover with Galois group \(G\) (cf.~\cite{stacks-project}*{Tag~0BN2}).

    The cover $X' \to X$ is determined topologically.
    Then, since $\cX \to T$ is topologically trivial and $T$ is contractible, the natural map $\pi_1(X)\to\pi_1(\mathcal{X})$ is an isomorphism and thus the cover $X' \to X$ extends to an analytic cover $q:\cX' \to \cX$, inducing a deformation of $X'$ over $T$.
    By pulling back a K\"ahler form, each fiber of $\cX'\rightarrow T$ is again K\"ahler.
    
    We claim that \(\mathcal{X}'\) also splits as a product.
    It suffices to show that a versal deformation of \(X'\) preserves the product structure, in which case the assertion follows by base change; see \cite{BL22}*{\S4.2}.
    Each factor \(X'_i\) has unobstructed deformations of dimension \(h^1(X'_i,T_{X'_i})\), and \(X'\) has unobstructed deformations of dimension \(h^1(X',T_{X'})\).
    Since deformations of the factors produce deformations of the product, it remains to show
    \[
        h^1(X',T_{X'}) \;=\; \sum_i h^1(X'_i,T_{X'_i}).
    \]
    This follows from the Künneth formula, using that all non-abelian factors satisfy the conditions
    \(h^0(X'_i,T_{X'_i})=0\) and \(h^1(X'_i,\mathcal{O}_{X'_i})=0\).
    Hence
    \[
        \mathcal{X}' \;=\; \prod \mathcal{X}'_i .
    \]
 Let \(\mathcal{L}'\) be the pullback of \(\mathcal{L}\) to \(\mathcal{X}'\).
    Since the strict Calabi--Yau and symplectic factors have vanishing irregularity, \(\Pic(\mathcal{X}')\) splits accordingly,
    so
    \[
        \mathcal{L}' = \boxtimes\, \mathcal{L}'_i.
    \]
    Thus \(\mathcal{L}'\) is relatively semiample if and only if each \(\mathcal{L}'_i\) is relatively semiample.

    For the strict Calabi--Yau factors, relative semiampleness follows from \autoref{main_prop}. For an irreducible symplectic factor \(X'_i\), \(L'_i\) is either big or induces a Lagrangian fibration (cf.~\cite{Mat99}*{Theorem~2}).  If \(L'_i\) defines a Lagrangian fibration, relative semiampleness follows from \cite[]{Matsushita2016}*{Theorem~1.2}.  If instead $L_i'$ is big, consider the miniversal deformation $\pi_i':(\mathcal{X}_i',\mathcal{L}_i')\to \Def(X_i',L_i')$ (see \cite[]{Matsushita2016}*{Theorem~1.1}).  Since $X'_i$ is projective (see, e.g., \cite{huybrechts_basic}*{Theorem 3.11}), by the Kawamata--Viehweg vanishing and cohomology and base change, $\pi_{i*}'\mathcal{L}_i'$ is locally free and compatible with base change, hence $\mathcal{L}_i'$ is relatively semiample.

    It remains to treat the complex torus factor \(\mathcal{X}'_{i_0}\), if present.
    By \autoref{lemma:bundle_ab_var}, there exists
    \[
        \mathcal{M}'_{i_0} \equiv_T \mathcal{L}'_{i_0}
    \]
    with \(\mathcal{M}'_{i_0}\) relatively semiample and $\cM'_{i_0}|_X\cong \cL_{i_0}'|_X$.
    Set
    \[
        \mathcal{M}' \coloneqq
            \mathcal{M}'_{i_0} \boxtimes
            \Bigl(\boxtimes_{i \neq i_0} \mathcal{L}'_i\Bigr).
    \]
    Then \(\mathcal{M}'\) is relatively semiample, satisfies \(\mathcal{M}' \equiv_T \mathcal{L}'\), and agrees with \(\mathcal{L}'\) on the central fiber. It follows that $\cM \coloneqq (\det q_*\cM')^{\otimes 2}$ is relatively semiample, numerically equivalent to $(\det q_*\cL')^{\otimes 2}=\cL^{\otimes 2|G|}\otimes(\det q_*\cO_{\cX'})^{\otimes 2}=\cL^{\otimes 2|G|}$, since the square of the determinant of the left regular representation of $G$ is trivial,
    and $\cM$ agrees with $\cL^{\otimes 2|G|}$ on the special fiber. The theorem then follows from \autoref{lem:roots}.
\end{proof}

Now, we prove the corollaries stated in the beginning of this section.

\begin{proof}[Proof of \autoref{cor:H2=0}]
    Let $\pi: \cX \to \Def(X)$ be the miniversal deformation of $X$. As $H^2(X,\cO_X) = 0$, the line bundle $L$ on $X$ deforms to a line bundle $\cL$ on $\cX$ (see, e.g., \cite{HartshorneDefTheory}*{Theorem 6.4}). Then the statement follows immediately from \autoref{thm:main_thm}.
\end{proof}

\begin{proof}[Proof of \autoref{cor:def_fibration}]
    Let $L$ be a semiample line bundle on $X$ inducing the fibration $f \colon X\to Y$. By \autoref{cor:H2=0},  $L$ deforms to a relatively semiample line bundle $\cL$ on $\cX$ over $\Def(X)$. Up to taking a tensor power, one can assume that $\cL$ is base-point free over $\Def(X)$, and let $\phi \colon \cX \to \cY$ be the map induced by $\cL$. 
    
    To conclude, we need to show that $p \colon \cY \to \Def(X)$ is flat, and it suffices to show that $\pi_{*}\cL$ is flat since $\cY=\Proj_{\Def(X)}\left(\oplus_{m\geq0}\pi_*\cL^{\otimes m}\right)$. Here, as $H^2(X,\cO_X)=0$, the miniversal deformation space $\Def(X)$ is algebraizable. The proof of \autoref{main_prop} shows that the map \[(\pi_\ast\cL^{\otimes m})\otimes k(0)\ \to\  H^0(X,L^{\otimes m})\] is surjective for any $m>0$, and it follows from cohomology and base change that $\pi_\ast\cL^{\otimes m}$ is locally free for any $m>0$, and in particular it is flat.
\end{proof}

To conclude this section, we prove \autoref{main.thm.intro_general}.

\begin{proof}[{Proof of \autoref{main.thm.intro_general}}]
    We utilize the notation and setup of \autoref{thm:main_thm}.
    Then, by \autoref{thm:main_thm}, to conclude, we only need to show that $\cY \to T$ is flat.

    First, we analyze the fibration defined by $\sum_{g \in G}g^\ast\cM '$ on $\cX'$.
    We let $\cY'$ denote the image of said fibration.
    By the splitting $\cL' = \boxtimes \cL'_i$ and the definition of $\sum_{g \in G}g^\ast\cM '$, $\cY'$ splits as a product $\cY' = \prod \cY'_i$, where $\cX_i' \to \cY'_i$ is the fibration defined by $\cL'_i$.
    If the factor $\cX'_i$ is a strict Calabi--Yau factor, $\cY'_i \to T$ is flat by \autoref{cor:def_fibration}.
    If the factor $\cX'_i$ is a complex tori factor, then $\cY'_i \to T$ is a family of complex tori and is therefore flat.
    Lastly, if $\cX'_i$ is an hyperk\"ahler factor, then $\cY'_i \to T$ is flat by \cite{Matsushita2016}*{Theorem 1.2} in the case $\cL_i'$ induces a Lagrangian fibration, and by
    \cite{wahl_equi}*{Theorem 1.4}, which works in the analytic category as well, in the case when it induces a birational contraction, using that the contraction has symplectic and therefore rational singularities.
    Thus, by taking the product, it follows that $\cY' \to T$ is flat.

    Then, by construction, $\cY$ is a $G$-quotient of $\cY'$---let $q:\cY'\to\cY$ be the quotient morphism.  From the existence of a trace map, $\cO_{\cY}$ is a split summand of $q_\ast\cO_{\cY'}$, so $\cY$ is flat.
\end{proof}

%% file: subvarieties_arxiv_v1.tex
\section{Deformation of subvarieties with trivial normal bundle} \label{section:3}

Let $X$ be a smooth strict Calabi--Yau variety and $Y \subset X$ a smooth divisor that is itself a $K$-trivial variety.
Then, by \cite{ran}*{Theorem 2.1}, the embedding $\iota \colon Y \hookrightarrow X$ is unobstructed.
In particular, since $h^0(Y,N_{Y/X})=h^0(Y,\omega_Y)=1$, it follows that $Y$ deforms in $X$ in a 1-dimensional family of subvarieties of $X$.
Since $X$ is strict Calabi--Yau, these subvarieties need to be linearly equivalent.
In particular, $Y$ moves in a basepoint free pencil, thus defining a morphism $X \to \mathbb P ^1$. Inspired by this observation, we investigate the following question:

\begin{question} \label{question}
    Let $X$ be a smooth $K$-trivial $n$-fold with $n\geq 3$, and $Y\subseteq X$ be a (smooth) subvariety with trivial normal bundle in $X$. 
    \begin{enumerate}
    \item \label{question:1} Are the deformations of $Y$ unobstructed? 
    \item \label{question:2} If yes, is $Y$ a fiber of some fibration of $X$?
    \end{enumerate}
\end{question}

The following example shows that \autoref{question}\autoref{question:2} has a negative answer in general.

\begin{example}[A hyperk\"{a}hler of K$3^{[2]}$-type]
Let $S$ be a K3 surface admitting an elliptic fibration $\phi \colon S\rightarrow \bP^1$.
Then the Hilbert scheme of two points $ S^{[2]}$ is a hyperk\"{a}hler 4-fold which admits a Lagrangian fibration
$$
\psi \ \colon \ S^{[2]} \ \longrightarrow \ (\bP^1)^{[2]}\simeq \bP^2
$$
whose general fiber is an abelian surface of the form 
$E_1\times E_2$, 
where 
$E_i$ 
is isomorphic to a fiber of 
$\phi$;
see \cite{Bea83} for details.
We denote by 
$\Delta \subset S^{[2]}$
the locus of length $2$ $0$-dimensional subschemes of $S$ supported at just one point.

Let $E$ be a smooth fiber of $\phi$, and $p\in S$ be a point away from $E$. Then $E+p$ gives an embedding of $E$ into $S^{[2]}\setminus \Delta$,
with image still denoted by $E+p$. Note that the preimage of $E$ under the \'{e}tale double cover $$\tau\ : \ (S\times S)\setminus \Delta_S \ \longrightarrow \ S^{[2]}\setminus\Delta $$ is $E\times\{p\}\sqcup \{p\}\times E$, and hence $\tau$ induces an isomorphism on formal neighborhoods of $E\times\{p\}$ and $E+p$. Then the triviality of the normal bundle of $E+p$ follows from that of $E\times\{p\}$ in $S\times S$.

By Matsushita's theorem \cite{Mat99}, $E$ cannot be a fiber of any fibration of $S^{[2]}$. However, up to an alteration $X$ of $S^{[2]}$, $E$ is a fiber of some fibration of $X$:
indeed, the double cover $S\times S$ of the symmetric product $S^{(2)}$ admits a fibration to $S\times \bP^1$ with one fiber isomorphic to $E$, and hence so does the corresponding double cover of $S^{[2]}$.
    
\end{example}

The following example shows how, in general, \autoref{question}\autoref{question:1} also has a negative answer: there are $K$-trivial varieties admitting $K$-trivial subvarieties with trivial normal bundle and obstructed deformations.

\begin{example}[A K3-fibered strict Calabi--Yau 3-fold]
    Let $D \subset \bP^3_{[x:y:z:w]} \times \bP^1_{[t_0:t_1]}$ be a divisor in the linear system $|\mathcal{O}(4,2)|$ with equation 
    \[
    D = (t_0^2 f_1(x,y,z,w) + t_0t_1 f_2(x,y,z,w) + t_1^2 f_3(x,y,z,w) = 0)
    \]
    where $f_i \in H^0(\bP^3, \mathcal{O}(4))$.
    Further assume that $f_1$ is the equation of a smooth quartic surface containing a line and $f_3$ is very general.
    We claim that $D$ may be chosen to be smooth.
    Granted this fact,
    by adjunction and the Lefschetz hyperplane theorem,
    $D$ is a strict Calabi--Yau 3-fold
    and via the projection $pr_2 \colon \bP^3_{[x:y:z:w]} \times \bP^1_{[t_0:t_1]} \to \bP^1$, $D \to \bP^1$ is a K3-fibration.
    Let $D_{[t_0:t_1]}$ denote the fiber over $[t_0:t_1]$.

    By construction, $D_{[0:1]}$ is a very general quartic K3 surface and so has Picard rank 1, and hence the same holds for the generic fiber of $D \to \mathbb P ^1$.
    The fiber $D_{[1:0]}$ is a quartic that contains a line and thus has an elliptic fibration: let $\langle H_1, H_2 \rangle \subset \bP^3$ be the pencil of hyperplanes containing the line.
    For any $H \in \langle H_1, H_2 \rangle$, the intersection $H \cap D_{[1:0]}$ is a reducible plane quartic that factors as the line together with a plane cubic $C$.
    By adjunction, $C^2 = 0$, so the family of residual cubics on $D_{[1:0]}$ gives an elliptic fibration on $D_{[1:0]}$.  Let $C$ be one of these elliptic curves.
    Choose $f_2$ in the equation of $D$ so that $C \subset (f_2 = 0)$.
    
    Now, we show that $D$ can be chosen to be smooth, for $f_1, f_2$ as above and $f_3$ very general.
    Consider the linear system $V \subset |\mathcal O (4,2)|$ spanned by all such $D$'s where $f_3$ is any quartic equation.
    By construction, the base locus of $V$ coincides with $D_{[1:0]}$.
    Thus, by Bertini's theorem applied to a resolution of the linear system $V$, $V$ is free away from $D_{[1:0]}$.
    In particular, for a general choice of $D$ (e.g., when $f_3$ determines a very general quartic surface), $D$ is smooth away from $D_{[1:0]}$.
    In turn, for such general choice, $D_{[1:0]}$ is a smooth Cartier divisor in $D$, and hence $D$ is also smooth along $D_{[1:0]}$. Therefore, we conclude $D$ is smooth and $D \to \mathbb P^1$ is a K3-fibered strict Calabi--Yau 3-fold as claimed.
 
    By construction, the elliptic curve $C \subset \bP^3 \times \bP^1$ is a complete intersection of three divisors:
    the fiber $\mathbf{P} = \bP^{3} \in |\mathcal{O}_{\bP^3 \times \bP^1}(0,1)|$ over $[1:0]$, a relative hyperplane section $\mathbf{H} = H \times \bP^1 \in |\mathcal{O}_{\bP^3 \times \bP^1}(1,0)|$, and a divisor $\mathbf{C} \in |\mathcal{O}_{\bP^3 \times \bP^1}(3,0)|$ where $\mathbf{C} \subset \bP^3 \times \bP^1$ is the pullback from $\bP^3$ of any cubic surface containing $C$ (for example, the cone over $C$).
    In particular, since $C$ is a complete intersection, its normal bundle splits as a direct sum of the restriction of the normal bundles of these three divisors to $C$.
    As $\mathcal{O}_{\bP^3 \times \bP^1}(a,b)|_{C} = \mathcal{O}_C(3a)$, we have
    \[
    \mathcal{N}_{C/\bP^3 \times \bP^1} = \mathcal{O}_C \oplus \mathcal{O}_C(3) \oplus \mathcal{O}_C(9) \quad \text{ and } \quad \mathcal{N}_{C/\mathbf{P}} = \mathcal{O}_C(3) \oplus \mathcal{O}_C(9).
    \]

    Consider the commutative diagram 
    \begin{center}
    \begin{tikzcd}
        & 0 \arrow[d] & 0 \arrow[d] &  & \\
        0 \arrow[r] & \mathcal{N}_{C/D_{[1:0]}} \arrow[r] \arrow[d] & \mathcal{N}_{C/\mathbf{P}} \arrow[r] \arrow[d]  & \mathcal{N}_{D_{[1:0]}/\mathbf{P}}|_C \arrow[r]  & 0 \\
        0 \arrow[r] & \mathcal{N}_{C/D} \arrow[r] \arrow[d] & \mathcal{N}_{C/\bP^3 \times \bP^1} \arrow[r] \arrow[d] & \mathcal{N}_{D/\bP^3 \times \bP^1}|_C \arrow[r]  & 0 \\
         & \mathcal{N}_{D_{[1:0]}/D}|_C  \arrow[d] & \mathcal{N}_{\mathbf{P}/\bP^3 \times \bP^1}|_C \arrow[d] &   & \\
        & 0  & 0  &  & \\
    \end{tikzcd}
\end{center}
From exactness of the first two columns and first two rows, we can extend the diagram by a vertical arrow $\mathcal{N}_{D_{[1:0]}/\mathbf{P}}|_C \to\mathcal{N}_{D/\bP^3 \times \bP^1}|_C  $  (or, observe that $D_{[1:0]}|_C = D|_C$ to get the right most vertical map) and a horizontal arrow $\mathcal{N}_{D_{[1:0]}/D}|_C  \to \mathcal{N}_{\mathbf{P}/\bP^3 \times \bP^1}|_C$ (or, observe that $\mathcal{O}_D(D_{[1:0]}) = \mathcal{O}_{\bP^3 \times \bP^1}(\mathbf{P})|_D$).
Note that $\mathcal{N}_{C/D_{[1:0]}} = \mathcal{O}_C$ and $\mathcal{N}_{D_{[1:0]}/D}|_C = \mathcal{O}_C$.
As $D_{[1:0]} \subset \mathbf{P} =\bP^3$ is a quartic surface, $\mathcal{N}_{D_{[1:0]}/\mathbf{P}}|_C = \mathcal{O}_{\bP^3}(4)|_C = \mathcal{O}_C(12)$.  Filling in the remaining terms from $\mathcal{O}_{\bP^3 \times \bP^1}(a,b)|_{C} = \mathcal{O}(3a)$, we claim we have a commutative diagram
\begin{center}
    \begin{tikzcd}
        & 0 \arrow[d] & 0 \arrow[d] & 0 \arrow[d] & \\
        0 \arrow[r] & \mathcal{O}_C \arrow[r] \arrow[d] & \mathcal{O}_C(3) \oplus \mathcal{O}_C(9) \arrow[r] \arrow[d] & \mathcal{O}_C(12) \arrow[r] \arrow[d] & 0 \\
        0 \arrow[r] & \mathcal{N}_{C/D} \arrow[r] \arrow[d] & \mathcal{O}_C \oplus \mathcal{O}_C(3) \oplus \mathcal{O}_C(9) \arrow[r] \arrow[d] & \mathcal{O}_C(12) \arrow[r] \arrow[d] & 0 \\
        0 \arrow[r] & \mathcal{O}_C \arrow[r] \arrow[d] & \mathcal{O}_C \arrow[r] \arrow[d] & 0 \arrow[r]  \arrow[d] & 0\\
        & 0  & 0  & 0 & \\
    \end{tikzcd}
\end{center}

Observe that the map $\mathcal{O}_C \to \mathcal{O}_C$ in the bottom row is an isomorphism.
Indeed, if not, the map $\mathcal{N}_{C/D} \to \mathcal{O}_C \oplus \mathcal{O}_C(3) \oplus \mathcal{O}_C(9)$ would lift to $\mathcal{O}_C(3) \oplus \mathcal{O}_C(9)$.
By the middle row, $\mathcal{N}_{C/D}$ is a saturated subsheaf of $\mathcal{O}_C \oplus \mathcal{O}_C(3) \oplus \mathcal{O}_C(9)$.
In particular, the lift would lead to an isomorphism between $\mathcal{N}_{C/D}$ and $\mathcal{O}_C(3) \oplus \mathcal{O}_C(9)$, which is impossible, since these two sheaves have different degrees.
Similarly, we obtain that the map $\mathcal{O}_C(12) \to \mathcal{O}_C(12)$ in the third column is an isomorphism as well, which gives commutativity of the diagram.

To prove $\mathcal{N}_{C/D}$ is trivial, it suffices to show the map $\mathcal{O}_C \oplus \mathcal{O}_C(3) \oplus \mathcal{O}_C(9) \to \mathcal{O}_C(12)$ in the middle row is trivial on the $\mathcal{O}_C$ factor.  Indeed, this will show that the splittings of the right two vertical sequences commute which gives a splitting of the first vertical sequence, i.e., $\mathcal{N}_{C/D} \cong \mathcal{O}_C \oplus \mathcal{O}_C$.

To show this, consider the map $\mathcal{O}_C \oplus \mathcal{O}_C(3) \oplus \mathcal{O}_C(9) \to \mathcal{O}_C(12)$ restricted to the map $\mathcal{O}_C \to \mathcal{O}_C(12)$.  By construction from the splitting of the middle vertical sequence, this is the map $\mathcal{N}_{\mathbf{P}/\bP^3 \times \bP^1}|_C \to \mathcal{N}_{D/\bP^3 \times \bP^1}|_C$, which takes a normal vector to $C$ that is normal to $\mathbf{P}$ and sends it to its image in the space of normal vectors to $D$.  By direct computation of the normal vectors and tangent space, by choice of $f_2$, the normal vectors to $C$ that are normal to $\mathbf{P}$ are tangent to $D$, and therefore this map is trivial, as desired.

We conclude that the normal bundle $\mathcal{N}_{C/D}$ is trivial.
Yet, the embedded deformations of $C$ in $D$ are obstructed.
Indeed, for $[t_1:t_2]$ very general, $D_{[t_1:t_2]}$ has Picard rank 1 and hence cannot contain an elliptic curve.
The equation of $D$ accounts for this behavior:
infinitesimally, if $t_1^2 = 0$ but $t_1 \ne 0$, both $f_1$ and $f_2$ vanish along $C$, and so $C$ ``deforms'' in $D$ to first order;
on the other hand, if $t_1^2 \ne 0$, as $f_3$ does not vanish on $C$, $C$ cannot deform inside $D$ past first order.
\end{example}